\journalname{BIT}
\date{ \phantom{b} \vspace{45mm}\phantom{e}}
\newcommand{\vecb}[1]{\boldsymbol#1}
\newcommand{\matr}[1]{\boldsymbol#1}
\newcommand{\dist}{\operatorname{dist}}
\def\Re{\text{\rm Re}\,}
\def\mi{\mathrm{i}}
\def\bone{\mathbbm{1}}
\def\dt{{\tau}}
\def\eps{\epsilon}
\def\C{\mathbb{C}}
\def\R{\mathbb{R}}
\begin{document}

\title{Runge--Kutta convolution coercivity and its use for time-dependent boundary integral equations}
\titlerunning{Runge--Kutta convolution coercivity}

\author{Lehel Banjai \and Christian Lubich}
\institute{
Lehel Banjai \at  The Maxwell Institute for Mathematics in the Sciences; School of Mathematical \& Computer Sciences, Heriot-Watt University, EH14 4AS Edinburgh, UK\\
\email{L.Banjai@hw.ac.uk} 
\and
Christian Lubich \at Mathematisches Institut, Universit\"at T\"ubingen, Auf der Morgenstelle,
  D-72076 T\"ubingen, Germany.\\
  \email{Lubich@na.uni-tuebingen.de}}

\authorrunning{L. Banjai and Ch. Lubich}

\date{}

\maketitle
\begin{abstract} A coercivity property of temporal convolution operators is an essential tool in the analysis of time-dependent boundary integral equations and their space and time discretisations. It is known that this coercivity property is inherited by convolution quadrature time discretisation based on A-stable multistep methods, which are of order at most two. Here we study the question as to which Runge--Kutta-based convolution quadrature methods inherit the convolution coercivity property. It is shown that this holds without any restriction for the third-order Radau IIA method, and on permitting a shift in the Laplace domain variable, this holds for all algebraically stable Runge--Kutta methods and hence for methods of arbitrary order. As an illustration, the discrete convolution coercivity is used to analyse the stability and convergence properties of the time discretisation of a non-linear boundary integral equation that originates from a non-linear scattering problem for the linear wave equation. Numerical experiments illustrate the error behaviour of the Runge--Kutta convolution quadrature time discretisation.

\keywords{Runge--Kutta convolution quadrature \and coercivity \and stability \and boundary integral equation \and wave equation}
\subclass{ 65L05 \and 65R20}
\end{abstract}

\section{Introduction}
\label{sect:intro}
This paper is concerned with a discrete coercivity property that ensures the stability of time discretisations of boundary integral equations for wave equations, also in situations such as

-   non-linear boundary integral equations;

-  boundary integral equations coupled with a wave equation in an interior domain, with an explicit time discretisation in the  domain.

For convolution quadrature based on A-stable multistep methods (which have approximation order at most two), it is known from \cite{BanLS} that the coercivity property is preserved under time discretisation, uniformly in the temporal stepsize. Here we study the preservation of convolution coercivity under time discretisation by Runge--Kutta convolution quadrature. Up to a shift in the Laplace variable and a corresponding reformulation of the boundary integral equation for an exponentially scaled solution function, we show that the convolution coercivity property is preserved by all convolution quadratures based on algebraically stable Runge--Kutta methods, which include in particular Radau~IIA methods of arbitrary order. Without any such shift and exponential scaling, the convolution coercivity is shown to be preserved by the two-stage Radau~IIA method of order three.

We illustrate the use of the discrete convolution coercivity by the stability and convergence analysis of the Runge--Kutta convolution quadrature time discretisation of a non-linear boundary integral equation for a non-linear scattering problem for the acoustic wave equation. This problem has been studied with different numerical methods in \cite{BanR}.

The discrete convolution coercivity is not needed for the corresponding linear scattering problem, because there the convolution quadrature time discretisation of the linear boundary integral equation can be interpreted as a convolution quadrature discretisation of the convolution operator that maps the data to the solution. Therefore known bounds of the Laplace transform of the solution operator and known error bounds of convolution quadrature yield stability and error bounds \cite{Lub94,Say16}. The same argument can also be used for the coupling of a linear wave equation in an interior domain with the boundary integral equation that describes transparent boundary conditions, provided that the convolution quadrature for the boundary integral equation is based on the same (implicit) time discretisation method as for the wave equation in the interior domain. This precludes explicit time-stepping in the interior. For the coupling of convolution quadrature on the boundary with an explicit time discretisation in the interior, the discrete convolution coercivity as considered in the present paper is needed; see \cite{BanLS,Ebe,KovL} for the coupling of implicit BDF2 convolution quadrature on the boundary with explicit leapfrog time-stepping in the domain for acoustic, elastic and electro-magnetic wave equations, respectively.

The paper is organised as follows:

In Section 2 we recall the continuous-time convolution coercivity, which is related to a coercivity property of the Laplace transform of the (distributional) convolution kernel that holds uniformly for all values of the Laplace-domain frequency variable in a (possibly shifted) right half-plane.

In Section 3 we study the preservation of the convolution coercivity under time discretisation by Runge--Kutta convolution quadrature. This preservation depends on the numerical range of the Runge--Kutta differentiation symbol, which is shown to lie in the right half-plane for algebraically stable Runge--Kutta methods. With a matrix-function inequality that is obtained as an extension of a theorem of von Neumann, we then prove our main result, Theorem~\ref{thm:rk-pos}, which yields the discrete convolution coercivity.

Section 4 recapitulates error bounds of Runge--Kutta convolution quadrature shown in \cite{BanLM}.

In Section 5 we apply our results to the time discretisation of the wave equation with a non-linear impedance boundary condition. We study only semi-discretisation in time, but note that this could be extended to full discretisation with  the techniques of \cite{BanR}. The error behaviour is illustrated by numerical experiments in Section~6. In the numerical experiments it is observed that the convolution quadrature based on the three-stage Radau IIA method performs well even without the shift and exponential scaling, which is more favourable than our theoretical results.

\section{Coercivity of temporal convolutions}\label{section:cont_herglotz}

The following coercivity result is given in \cite{BanLS}, where it is used as a basic result in studying boundary integral operators for the acoustic wave equation; see also \cite{KovL} for Maxwell's equation and \cite{Ebe} for elastic wave equations. The result can be viewed as a time-continuous operator-valued extension of a theorem of Herglotz from 1911, which states that an analytic function has positive real part on the unit disk if and only if convolution with its coefficient sequence is a positive semi-definite operation.

Let $V$ be a complex Hilbert space and $V'$ its dual, and let $\langle\cdot,\cdot\rangle$ denote the anti-duality between $V$ and $V'$.
Let $L(s): V \rightarrow V'$ and $R(s) : V \rightarrow V$ be analytic families of bounded linear operators for $\Re s > \sigma$, continuous for $\Re s\ge\sigma$.  We assume the uniform bounds, with some real exponent $\mu$,
\begin{equation}
  \label{eq:bound_B_s}
  \|L(s)\|_{V' \leftarrow V} \leq M(1+ |s|)^\mu, \quad \|R(s)\|_{V \leftarrow V} \leq M (1+|s|)^\mu, \qquad \Re s > \sigma.
\end{equation}
This polynomial bound guarantees that $L$ is the Laplace transform of a distribution $\ell$.
If we write $L(s)=s^k L_k(s)$ with an integer $k>\mu+1$, then the Laplace inversion formula
$$
\ell_k(t) = \frac1{2\pi \mi} \int_{\sigma'+i\R} e^{st}\, L_k(s)\, ds, \quad t\ge 0 \quad (\sigma'>\sigma)
$$
defines a continuous and exponentially bounded function $\ell_k$, which has $\ell$ as its $k$th distributional derivative. We write the convolution with $\ell$ as
$$
u(t) = L(\partial_t)f\,(t) = (\ell*f)(t)=\Bigl(\frac d{dt}\Bigr)^k \int_0^t \ell_k(t-\tau)\, f(\tau)\, d\tau,\quad\ t>0,
$$
for functions $f$ on $[0,T]$ whose extension to $t<0$ by $0$ is $k$ times continuously differentiable.
Similarly we consider the convolution $R(\partial_t) f$.

\begin{theorem}\label{thm:cont_Herglotz} \cite[Lemma 2.2]{BanLS} Let $\alpha\ge 0$.
  In the above situation, the following statements are equivalent:
  \begin{enumerate}
  \item $\Re \langle v, L(s) v\rangle \geq \alpha \|R(s) v\|^2 \qquad \hbox{for all } v \in V, \; \Re s > \sigma$.
  \\
\item $ \displaystyle \int_0^\infty e^{-2\sigma t} \,\Re \langle f(t) , L(\partial_t) f (t) \rangle\, dt \geq \alpha \int_0^\infty e^{-2\sigma t}\, \|R(\partial_t) f(t)\|^2 \,dt$\\ for all  $f \in C^k([0,\infty), V)$ with finite support and $f^{(j)}(0)=0$ for $0\le j<k$, 
and for all $t \geq 0$.
  \end{enumerate}
\end{theorem}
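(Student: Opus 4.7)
The plan is to reduce the equivalence to an application of Plancherel's identity on the vertical line $\Re s = \sigma$. Denote by $F(s) = \int_0^\infty e^{-st} f(t)\,dt$ the Laplace transform of $f$. For the admissible class ($f\in C^k$ with compact support and $f^{(j)}(0)=0$ for $0\le j<k$, with $k>\mu+1$), $k$ successive integrations by parts give $|F(s)|\lesssim |s|^{-k}$ uniformly on $\Re s\ge\sigma$. The exponentially weighted function $g(t) = e^{-\sigma t}f(t)$ has Fourier transform $\widehat g(\omega) = F(\sigma+\mi\omega)$, and the Fourier transform of $e^{-\sigma t}L(\partial_t)f(t)$ is $L(\sigma+\mi\omega)F(\sigma+\mi\omega)$, with the analogous identity for $R(\partial_t)f$. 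The growth bound~\eqref{eq:bound_B_s} combined with $k>\mu+1$ places all three in $L^2(\R)$, so Plancherel's theorem yields
\begin{equation*}
\int_0^\infty e^{-2\sigma t}\bigl\langle f(t),L(\partial_t)f(t)\bigr\rangle\,dt = \frac1{2\pi}\int_\R \bigl\langle F(\sigma+\mi\omega),L(\sigma+\mi\omega)F(\sigma+\mi\omega)\bigr\rangle\,d\omega,
\end{equation*}
together with the corresponding identity for the $R$-integral on the right-hand side of statement~(2).

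For (1)$\Rightarrow$(2), I would apply (1) pointwise at $s = \sigma+\mi\omega$ (the inequality extending to the boundary line by continuity of $L,R$) with $v = F(\sigma+\mi\omega)$, take real parts, and integrate over $\omega$. The partial-interval version covered by the qualifier ``for all $t\ge 0$'' is obtained by causality: replacing $f$ by its (smoothly approximated) restriction to $[0,t]$ leaves $L(\partial_t)f$ and $R(\partial_t)f$ unchanged on $[0,t]$, while the additional $[t,\infty)$ contribution to the right-hand side is non-negative.

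For (2)$\Rightarrow$(1), the strategy is to concentrate the Plancherel integrand at any prescribed point $\sigma + \mi\omega_0$. Pick $v\in V$, $\omega_0\in\R$, and a smooth compactly supported bump $\varphi_\varepsilon$ on $(0,\infty)$ with $\varphi_\varepsilon^{(j)}(0) = 0$ for $0 \leq j < k$, scaled so that $|\widehat{\varphi_\varepsilon}|^2$, normalised by its mass, tends to a Dirac measure at the origin as $\varepsilon\to 0$ (for instance a mollified indicator of a long interval $[1,1/\varepsilon]$). Setting $f_\varepsilon(t) = \varphi_\varepsilon(t)e^{(\sigma+\mi\omega_0)t}v$ and inserting into (2), an approximate-identity argument, valid since $\omega\mapsto\Re\langle v,L(\sigma+\mi\omega)v\rangle$ and $\omega\mapsto\|R(\sigma+\mi\omega)v\|^2$ are continuous, extracts the pointwise bound $\Re\langle v,L(\sigma+\mi\omega_0)v\rangle \ge \alpha\|R(\sigma+\mi\omega_0)v\|^2$ on the line. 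The passage off the line to $\Re s>\sigma$ then relies on analyticity of $s\mapsto\langle v,L(s)v\rangle$: its harmonic real part admits a Poisson-integral representation determined by its boundary values (using~\eqref{eq:bound_B_s} and the boundary control of $\|R(s)v\|^2$), which pushes the inequality into the open half-plane.

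The principal obstacle will be constructing $\varphi_\varepsilon$ satisfying three competing constraints: $C^k$ regularity with vanishing Taylor data at $0$, compact support in $(0,\infty)$, and spectral concentration at a prescribed frequency. The freedom to translate $\varphi_\varepsilon$ away from the origin resolves the tension, since both the boundary conditions and the spectral concentration can then be arranged by stretching the support. A secondary technical point is the off-line extension from $\Re s = \sigma$ to $\Re s>\sigma$: because $\|R(s)v\|^2$ is only subharmonic, a direct minimum-principle extension fails, and one must lean on the Poisson representation of the harmonic function $\Re\langle v,L(s)v\rangle$ controlled by its boundary values.
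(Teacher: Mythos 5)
The paper does not actually prove this theorem --- it is quoted verbatim from \cite{BanLS} --- so the comparison is against the statement itself and the cited proof. The decisive point your proposal misses is that condition~2 is meant as a family of \emph{finite-time} inequalities, $\int_0^t e^{-2\sigma\tau}\Re\langle f,L(\partial_t)f\rangle\,d\tau\ge\alpha\int_0^t e^{-2\sigma\tau}\|R(\partial_t)f\|^2\,d\tau$ for every $t\ge0$ (this is what the otherwise dangling qualifier ``and for all $t\ge0$'' refers to, and it is the form actually used in Section~5, where the sums run only up to $N$). Your argument works exclusively with the single inequality over $[0,\infty)$, and that version is strictly weaker: by Plancherel it encodes condition~1 only on the \emph{line} $\Re s=\sigma$, never in the open half-plane. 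Concretely, take $V=\C$, $R\equiv0$, $L(s)=1-\eps(s-\sigma)^2$. This satisfies all hypotheses with $\mu=2$, and for every admissible $f$, writing $g(t)=e^{-\sigma t}f(t)$, one finds $\int_0^\infty e^{-2\sigma t}\Re\bigl(\bar f\,L(\partial_t)f\bigr)\,dt=\int_0^\infty\bigl(|g|^2+\eps|g'|^2\bigr)\,dt\ge0$, so your reading of condition~2 holds; yet $\Re L(\sigma+x)=1-\eps x^2<0$ for large real $x>0$, so condition~1 fails. Hence your step (2)$\Rightarrow$(1) cannot be closed by any boundary-to-interior device: the Poisson representation you propose does not even exist for harmonic functions of polynomial growth of degree $\ge1$ on a half-plane (the boundary integral diverges), and a Phragm\'en--Lindel\"of argument fails for the same reason ($\Re(s-\sigma)^2$ is harmonic, vanishes on the line, and is negative on part of the half-plane). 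The positivity in the interior of the half-plane is forced precisely by the finite-time inequalities, which your test functions never exploit. Your concentration argument is fine as far as it goes, but it only yields condition~1 on $\Re s=\sigma$.

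The forward direction has the mirror-image gap once the finite-time version is the target. Your causality/truncation argument controls the right-hand side (the discarded tail is nonnegative) but not the left: replacing $f$ by a smooth cutoff $f_\eps$ supported in $[0,t+\eps]$ leaves $\int_0^t\Re\langle f,L(\partial_t)f\rangle$ unchanged by causality, but adds the contribution $\int_t^{t+\eps}\Re\langle f_\eps,L(\partial_t)f_\eps\rangle$, and since $L(\partial_t)f_\eps=\ell_k* f_\eps^{(k)}$ involves $k$ derivatives of the cutoff, this term is in general of size $\eps^{2-k}$ and does not vanish as $\eps\to0$ when $k\ge3$ (and $k>\mu+1$ may be large). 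So the only parts of the proposal that stand are the Plancherel identity on the line $\Re s=\sigma$ and the resulting equivalence of the $[0,\infty)$-inequality with coercivity on that line; the passage between the half-plane and the finite-interval statements, which is the actual content of the lemma in \cite{BanLS}, is missing in both directions.
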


Property {\it 1.}~is known to be satisfied for  the Laplace transforms of various boundary integral operators for wave equations \cite{BamH,BanLS,KovL,LalS,Say16}, and it is a fundamental property in the study of boundary integral equations for wave equations.

We are interested in time discretisations of the convolution operators $L(\partial_t)$ and $R(\partial_t)$ that preserve this coercivity property. It was shown in \cite{BanLS} that this is achieved by convolution quadrature based on A-stable multistep methods such as the first- and second-order backward differentiation formulae. In Theorem~\ref{thm:rk-pos} below we will show that the coercivity property is also preserved by convolution quadrature based on certain Runge--Kutta methods such as the third-order, two-stage Radau IIA method. For the particular case $\sigma=0$, it will be shown to be preserved for all algebraically stable Runge--Kutta methods.

\section{Preserving coercivity by Runge--Kutta convolution quadrature}

\subsection{Runge--Kutta differentiation symbol and convolution quadrature}
\label{subsec:rkcq}
An $m$-stage Runge--Kutta  discretisation of the initial value problem $y'
= f(t,y)$, $y(0) = y_0$, is given by
\[
\begin{split}
Y_{ni} &= y_n + \dt \sum_{j = 1}^m a_{ij} f(t_n+c_jh,Y_{nj}), \qquad i =
1,\dots,m,\\
y_{n+1} & = y_n + \dt \sum_{j = 1}^m b_j f(t_n+c_jh,Y_{nj}),
\end{split}
\]
where $\dt>0$ is the time step, $t_n = n\dt$, and the internal stages $Y_{ni}$ and grid values $y_n$ are
approximations to $y(t_n+c_i \dt)$ and $y(t_n)$, respectively. 
In the  following we use the notation
\[
\mathscr{A} = (a_{ij})_{i,j = 1}^m, \quad b = (b_1,\dots,b_m)^T,
\quad \bone = (1,1,\dots,1)^T.
\]
We always assume that the Runge--Kutta matrix $\mathscr{A}$ is invertible.

As has been shown in \cite{LubO,SchLL,BanL,BanLM}, and in applications to wave propagation problems further  in \cite{BanK,BanMS,BanR,WanW}, Runge--Kutta methods can be used to construct convolution quadrature methods that enjoy favourable properties.
Here one uses the {\it Runge--Kutta differentiation symbol}
\begin{equation}\label{Delta}
\Delta(\zeta) = \Bigl(\mathscr{A}+\frac\zeta{1-\zeta}\bone b^T\Bigr)^{-1}, \qquad 
\zeta\in\C \hbox{ with } |\zeta|<1.
\end{equation}
This is well-defined for $|\zeta|<1$ if 
$R(\infty)=1-b^T\mathscr{A}^{-1}\bone$ satisfies $|R(\infty)|\le 1$. In fact, the Sherman-Morrison formula then yields
$$
\Delta(\zeta) = \mathscr{A}^{-1} -\frac{\zeta}{1-R(\infty)\zeta} \mathscr{A}^{-1} \bone b^T \mathscr{A}^{-1}.
$$
To formulate the Runge--Kutta convolution quadrature for $L(\partial_t )g$, we formally replace in $L(s)$ the differentiation symbol $s$ by $\Delta(\zeta)/\dt$ and expand the operator-valued matrix function
$$
L\Bigl(\frac{\Delta(\zeta)}\dt \Bigr) = \sum_{n=0}^\infty \matr{W}_n(L) \zeta^n,
$$
where in the case of $L(s):V\to V'$ we have the convolution quadrature matrices $\matr{W}_n(L):V^m \to (V')^m$. 
For the discrete convolution with these matrices we use the notation
$${}{}
\bigl(L(\vecb{\partial}_t^\dt)\vecb{f}\bigr)_n = \sum_{j=0}^n \matr{W}_{n-j}(L) \vecb{f}_j
$$
for any sequence $\vecb{f}=(\vecb{f}_n)$ in $V^m$. For vectors of function values of a function $f:[0,T]\to V$ given as $\vecb{f}_n=\bigl(f(t_n+c_i\dt)\bigr)_{i=1}^m$, the $i$th component of the vector $\bigl(L(\vecb{\partial}_t^\dt)\vecb{f}\bigr)_n$ is considered as an approximation to
$\bigl(L(\partial_t)f\bigr)(t_n+c_i\dt)$.

In particular, if $c_m=1$, as is the case with Radau IIA methods, then
the continuous convolution at $t_{n+1}$ is approximated by the last component of the discrete block convolution:
$$
\bigl(L(\partial_t)f\bigr)(t_{n+1}) \approx e_m^T \bigl(L(\vecb{\partial}_t^\dt)\vecb{f}\bigr)_n,
$$
where $e_m = (0, \dots, 0, 1)^T$ is the $m$th unit vector.
We recall the composition rule 
$$
L_2(\vecb{\partial}_t^\dt)\,L_1(\vecb{\partial}_t^\dt)\vecb{f}=(L_2L_1)(\vecb{\partial}_t^\dt)\vecb{f}.
$$
For $\lambda\in\C$, the convolution quadrature $(\vecb{\partial}_t^\dt-\lambda)^{-1}\vecb{f}$ (which is to be interpreted as $L(\vecb{\partial}_t^\dt)\vecb{f}$ for the multiplication operator $L(s)=(s-\lambda)^{-1}$) contains the internal stages of the Runge--Kutta approximation to the linear differential equation $y'=\lambda y +f$ with initial value $y(0)=0$. 

Results on the order of convergence of this approximation are given in \cite{BanL,BanLM,LubO}. The result of \cite{BanLM}, which is relevant for operators $L(s)$ arising in wave propagation, will be restated and extended to the internal stages in Section~\ref{sec:conv}.

\subsection{Numerical range of the Runge--Kutta differentiation symbol}
We now consider methods that are {\it algebraically stable}:
\begin{itemize}
\item All weights $b_i$ are positive.
\item The symmetric matrix with entries
$b_ia_{ij}+b_ja_{ji}- b_ib_j$ is positive semi-definite.
\end{itemize}
Gauss methods and Radau IIA methods are  widely used classes of methods that satisfy this condition. We refer the reader to \cite{HaiW} for background literature on Runge--Kutta methods and their stability notions.

We consider the weighted inner product on $\C^m$,
\begin{equation}\label{b-inner-product}
(u,v) = \sum_{i=1}^m b_i \overline u_i v_i, \qquad u,v\in \C^m.
\end{equation}
We have the following characterisation.

\begin{lemma} \label{lem:rk-nonneg}
For an algebraically stable Runge--Kutta method and for the $b$-weighted inner product \eqref{b-inner-product},
$$
\Re (w,\Delta(\zeta)w) \ge 0 \quad\hbox{ for all}\ w\in \C^m, |\zeta|<1.
$$
Conversely, if the differentiation symbol of a Runge--Kutta method with positive weights $b_i$ satisfies this inequality, then the method is algebraically stable.
\end{lemma}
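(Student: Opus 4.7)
The plan is to reduce everything to a short computation in the $b$-inner product, built around the observation that the definition \eqref{Delta} gives the inverse $\Delta(\zeta)^{-1} = \mathscr{A} + \tfrac{\zeta}{1-\zeta}\bone b^{T}$ directly, so substituting $v := \Delta(\zeta) w$ removes the matrix inverse entirely:
\begin{equation*}
(w,\Delta(\zeta)w) \;=\; (\Delta(\zeta)^{-1}v,\,v) \;=\; (\mathscr{A} v,v) \,+\, \overline{\tfrac{\zeta}{1-\zeta}}\,(\bone b^{T}v,\,v).
\end{equation*}
Since $b$ is real and the inner product is antilinear in the first slot, a direct expansion gives $(\bone b^{T}v,v) = \overline{b^{T}v}\,(b^{T}v) = |b^{T}v|^{2}$, and taking real parts produces the central identity
\begin{equation*}
\Re(w,\Delta(\zeta)w) \;=\; \Re(\mathscr{A} v,v) \,+\, \Re\tfrac{\zeta}{1-\zeta}\cdot|b^{T}v|^{2}.
\end{equation*}

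For the forward direction I would then read off both ingredients of algebraic stability. Expanding $2\Re(\mathscr{A} v,v) = \sum_{i,j}(b_{i}a_{ij}+b_{j}a_{ji})\bar v_{i}v_{j}$ and inserting positive semi-definiteness of the matrix $b_{i}a_{ij}+b_{j}a_{ji}-b_{i}b_{j}$ yields $\Re(\mathscr{A} v,v)\ge\tfrac12|b^{T}v|^{2}$. The conclusion follows from the classical fact that the Möbius transformation $\zeta\mapsto\zeta/(1-\zeta)$ carries the open unit disk biholomorphically onto the half-plane $\{\Re z > -\tfrac12\}$, whence
\begin{equation*}
\Re(w,\Delta(\zeta)w) \;\ge\; \bigl(\tfrac12 + \Re\tfrac{\zeta}{1-\zeta}\bigr)|b^{T}v|^{2} \;\ge\; 0 \qquad (|\zeta|<1).
\end{equation*}

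For the converse, assuming $b_{i}>0$ and the nonnegativity $\Re(w,\Delta(\zeta)w)\ge 0$, I would fix $v\in\C^{m}$ arbitrarily, choose $w(\zeta):=(\mathscr{A}+\tfrac{\zeta}{1-\zeta}\bone b^{T})v$ (so that $\Delta(\zeta)w(\zeta)=v$ identically in $\zeta$), and let $\zeta\to -1$ along the real axis from inside the disk. Then $\Re\tfrac{\zeta}{1-\zeta}\to -\tfrac12$, the quantity $(w(\zeta),v)$ is continuous at $\zeta=-1$, and the limit of the identity above gives $2\Re(\mathscr{A} v,v) - |b^{T}v|^{2}\ge 0$ for every $v$, which together with positivity of the $b_{i}$ is exactly algebraic stability. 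I do not expect a real obstacle here; the only conceptual point worth flagging is that the image of $|\zeta|<1$ under $\zeta\mapsto\zeta/(1-\zeta)$ is precisely $\{\Re z>-\tfrac12\}$ and the endpoint $-\tfrac12$ matches exactly the $-\tfrac12$ appearing in the algebraic-stability form $2\Re(\mathscr{A} v,v)-|b^{T}v|^{2}$, which is what makes the equivalence tight in both directions.
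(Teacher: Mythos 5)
Your proof is correct and follows essentially the same route as the paper: the substitution $v=\Delta(\zeta)w$, the identity $\Re(\Delta(\zeta)^{-1}v,v)=\Re(\mathscr{A}v,v)+\Re\tfrac{\zeta}{1-\zeta}\,|b^Tv|^2$, and the algebraic-stability matrix controlling $2\Re(\mathscr{A}v,v)-|b^Tv|^2$ (the paper merely packages the constant $-\tfrac12$ into $\mathscr{C}=\mathscr{A}-\tfrac12\bone b^T$ and uses $\Re\tfrac{1+\zeta}{1-\zeta}>0$, which is the same computation). Your limiting argument $\zeta\to-1$ for the converse, which the paper leaves implicit, is a valid way to close that direction.
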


\begin{proof} With a different notation, this is shown in \cite[p.\,232, (6.19)]{LubO87}. For the convenience of the reader we include the short proof. 
Since for $v=\Delta(\zeta)w$ we have $(w,\Delta(\zeta)w)=(\Delta(\zeta)^{-1}v,v)$, it suffices to show that 
$$
\Re (v,\Delta(\zeta)^{-1}v) \ge 0 \quad\hbox{ for all}\ v\in \C^m, |\zeta|<1.
$$
We rewrite
$$
\Delta(\zeta)^{-1} = \mathscr{A}+\frac\zeta{1-\zeta}\bone b^T = \mathscr{C}+ \frac12\frac{1+\zeta}{1-\zeta}\bone b^T, \quad\ \hbox{ with}\quad \mathscr{C}= \mathscr{A}-\tfrac12\bone b^T,
$$
and observe that (cf.\cite{HaiL})
\begin{align*}
(\bone b^Tv,v) &= \Bigl| \sum_{i=1}^m b_i v_i \Bigr|^2
\\
2\,\Re(\mathscr{C}v,v) &= \sum_{i,j=1}^m (b_ia_{ij}+b_ja_{ji}-b_ib_j) \bar v_i v_j.
\end{align*}
Since $\Re (1+\zeta)/(1-\zeta)>0$ for $|\zeta|<1$, the result follows.
\qed
\end{proof}

In this paper we will need a stronger positivity property, for which we show the following order barrier and  a positive result for the two-stage Radau IIA method, which is of order 3 and has the coefficients
$$
\mathscr{A} = \begin{pmatrix} 5/12 & -1/12 \\ 3/4 & 1/4
\end{pmatrix}, \quad\  b^T = (3/4,1/4).
$$
\begin{lemma}\label{lem:rk-coerc}
(a) For the two-stage Radau IIA method and for the $b$-weighted inner product \eqref{b-inner-product} and corresponding norm $|\cdot|$ we have
\begin{equation}\label{rk-coerc-1}
\Re (w,\Delta(\zeta)w) \ge \tfrac12 (1-|\zeta|^2) |w|^2 \quad\hbox{ for all }\ w\in \C^m, \ |\zeta|\le 1.
\end{equation}
(b) For none of the Gauss methods with two or more stages and none of the Radau IIA methods with three or more stages, there exists $c>0$ such that for all sufficiently small $\delta>0$,
\begin{equation}\label{rk-coerc}
\Re (w,\Delta(\zeta)w) \ge c\delta \,|w|^2 \quad\hbox{ for all }\ w\in \C^m, \ |\zeta| \le e^{-\delta}.
\end{equation}
\end{lemma}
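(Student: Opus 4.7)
The plan is to treat the two parts separately using the Sherman--Morrison formula
\[
\Delta(\zeta)=\mathscr{A}^{-1}-\frac{\zeta}{1-R(\infty)\zeta}\,\mathscr{A}^{-1}\bone\, b^T\mathscr{A}^{-1}
\]
from Subsection~\ref{subsec:rkcq}, together with a spectral description of the boundary matrix $\Delta(1)$ for part~(b).

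For part~(a), the two-stage Radau IIA method is L-stable, so $R(\infty)=0$ and the formula reduces to $\Delta(\zeta)=\mathscr{A}^{-1}-\zeta\,\mathscr{A}^{-1}\bone\, b^T\mathscr{A}^{-1}$. Substituting the method coefficients gives $\mathscr{A}^{-1}\bone=(2,-2)^T$ and $b^T\mathscr{A}^{-1}=(0,1)$, so $\Delta(\zeta)$ is an explicit $2\times 2$ matrix. Expanding $\Re(w,\Delta(\zeta)w)-\tfrac12(1-|\zeta|^2)|w|^2$ in the $b$-weighted inner product and collecting like terms, the claim should reduce to
\[
\tfrac{6+3|\zeta|^2}{8}\,|w_1|^2+\tfrac{|2+\zeta|^2}{8}\,|w_2|^2\;\ge\;\tfrac34\,\Re\bigl((1+2\zeta)\bar w_1 w_2\bigr),
\]
which by the AM--GM-type bound $2\sqrt{AB}\,|w_1||w_2|\le A|w_1|^2+B|w_2|^2$ follows from the scalar inequality $(6+3|\zeta|^2)|2+\zeta|^2\ge 9|1+2\zeta|^2$. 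Expanding both sides, the difference should factorise as $3(1-|\zeta|^2)(5-4\Re\zeta-|\zeta|^2)$; both factors are non-negative on $|\zeta|\le 1$ since $\Re\zeta\le|\zeta|$ yields $5-4\Re\zeta-|\zeta|^2\ge 5-4|\zeta|-|\zeta|^2=(5+|\zeta|)(1-|\zeta|)\ge 0$.

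For part~(b), the strategy is to exhibit, for each class of methods, a direction $w$ along which the lower bound collapses faster than $\delta$. The key algebraic fact, obtained from the rank-one structure of $\Delta(\zeta)^{-1}$ together with the identity $R(z)=1+z\,b^T(I-z\mathscr{A})^{-1}\bone$, is that the non-zero eigenvalues of $\Delta(1)$ are precisely the non-zero roots of $R(\lambda)=1$. For even-stage Gauss methods one has $R(\infty)=1$, hence $b^T\mathscr{A}^{-1}\bone=0$, and the explicit identity $(\bone,\Delta(\zeta)\bone)=(1-R(\infty))(1-\zeta)/(1-R(\infty)\zeta)\equiv 0$ immediately defeats any $c>0$ with $w=\bone$. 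For three-stage Gauss, $R$ is the $(3,3)$-Padé approximant of $e^z$ and $R(\lambda)=1$ simplifies to $\lambda(1+\lambda^2/60)=0$, with purely imaginary non-zero roots $\pm i\sqrt{60}$; the corresponding eigenvector $w$ of $\Delta(1)$ makes $(w,\Delta(1)w)$ purely imaginary, and a Möbius expansion of $(w,\Delta(\zeta)w)$ in $\zeta$ combined with the symmetry $R(z)R(-z)=1$ of Gauss methods should yield $\Re(w,\Delta(e^{-\delta})w)=o(\delta)|w|^2$. For Radau IIA with $m\ge 3$, the analogous argument uses the complex roots of $R(\lambda)=1$ arising from the $(m-1,m)$-Padé approximant of $e^z$.

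The main obstacle is the Radau IIA case with $m\ge 3$: the non-zero roots of $R(\lambda)=1$ have non-zero real part, so the eigenvector alone does not make $(w,\Delta(1)w)$ purely imaginary, and one must form a $\delta$-dependent combination with the kernel vector $\bone$ (or with a generalised eigenvector) to cancel the leading real part in the radial expansion of $(w,\Delta(\zeta)w)$ at $\zeta=1$. Arranging and verifying this cancellation via the algebraic structure of Padé approximants is the delicate step; by contrast, part~(a) is essentially a finite-dimensional factoring exercise once the explicit $2\times 2$ matrix $\Delta(\zeta)$ is written down.
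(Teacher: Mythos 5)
Part (a) of your proposal is correct and complete: the reduction to the scalar inequality $(6+3|\zeta|^2)\,|2+\zeta|^2\ge 9\,|1+2\zeta|^2$ and its factorisation as $3(1-|\zeta|^2)(5-4\Re\zeta-|\zeta|^2)$ both check out (as do $\mathscr{A}^{-1}\bone=(2,-2)^T$, $b^T\mathscr{A}^{-1}=(0,1)$ and the coefficient bookkeeping). This is essentially the paper's computation in different clothes: the paper bounds the smallest eigenvalue of the Hermitian part of $\mathscr{B}\Delta(\zeta)$ by determinant over trace, and your AM--GM step amounts to checking that the determinant of the shifted Hermitian part is nonnegative.

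Part (b) has a genuine gap. Your argument is complete only for even-stage Gauss methods, where the identity $(\bone,\Delta(\zeta)\bone)=(1-R(\infty))(1-\zeta)/(1-R(\infty)\zeta)$ with $R(\infty)=1$ indeed kills any $c>0$. For odd-stage Gauss with $m\ge3$ and for Radau IIA with $m\ge3$ you only sketch a perturbation off an eigenvector $w$ of $\Delta(1)$ with purely imaginary (or complex) eigenvalue, and you acknowledge that the required cancellation of the first-order term in $\delta$ has not been arranged. This is not a cosmetic omission: by Lemma~\ref{lem:rk-nonneg} one has $\Re(w,\Delta(\zeta)w)\ge0$ for $|\zeta|<1$, with equality at $\zeta=1$ along your $w$, so the inward radial derivative is automatically nonnegative; if it is strictly positive, then $\Re(w,\Delta(e^{-\delta})w)\sim c'\delta\,|w|^2$ with $c'>0$ and that $w$ is \emph{not} a counterexample direction at all. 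Nothing in your proposal rules this out, so the mechanism can fail. The idea you are missing is structural rather than perturbative: via the W-transformation ($W^T\mathscr{B}W=I$, $W^{-1}\mathscr{A}W=X$ with $X-\tfrac12 e_1e_1^T-\beta_m e_me_m^T$ skew-symmetric, $W^Tb=e_1$), the matrix $W^{-1}\Delta(\zeta)^{-1}W$ is skew-hermitian plus a rank-1 (Gauss) or rank-2 (Radau IIA) perturbation, and hence by Sherman--Morrison--Woodbury the Hermitian part of $W^T\mathscr{B}\Delta(\zeta)W$ has rank at most $1$, respectively $2$, \emph{for every fixed} $|\zeta|<1$. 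Its kernel is therefore nontrivial as soon as $m\ge2$ (Gauss) or $m\ge3$ (Radau IIA), and any $w\ne0$ obtained by pulling back a kernel vector gives $\Re(w,\Delta(\zeta)w)=0$ exactly, which defeats \eqref{rk-coerc} for every $c>0$ and every $\delta>0$. Replacing your eigenvector/Pad\'e analysis by this rank argument closes the gap.
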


Clearly, \eqref{rk-coerc-1} implies \eqref{rk-coerc} with $c$ arbitrarily close to 1 for small $\delta$. We further note that the implicit Euler method and the implicit midpoint rule (which are the one-stage Radau IIA and Gauss methods, respectively) also satisfy  \eqref{rk-coerc}.

\begin{proof} (a) For the two-stage Radau IIA method we find
$$
\Delta(\zeta) = \frac12 \begin{pmatrix} 3 & 1 - 4\zeta \\ -9 &\; 5 + 4\zeta \end{pmatrix}.
$$
Denoting the diagonal matrix of the weights by $\mathscr{B}={\rm diag}(3/4,1/4)$, we 
note 
$$
\Re (w,\Delta(\zeta)w) = {\overline w}^T \mathscr{B}^{1/2} \cdot \mathscr{B}^{-1/2}
\tfrac12(\mathscr{B}\Delta(\zeta)+\Delta(\overline\zeta)^T\mathscr{B}) \mathscr{B}^{-1/2} \cdot \mathscr{B}^{1/2}w.
$$
We obtain the hermitian matrix
$$
\mathscr{B}^{-1/2}
\tfrac12(\mathscr{B}\Delta(\zeta)+\Delta(\overline\zeta)^T\mathscr{B}) \mathscr{B}^{-1/2} = \frac12 \begin{pmatrix} 3 & -\sqrt3 (1+2\zeta) \\
-\sqrt3 (1+2\overline\zeta) & 5 + 4\, \Re\zeta \end{pmatrix},
$$
which has the trace $4+2\Re\zeta$ and the determinant
$3(1-|\zeta|^2)$. It follows that both eigenvalues are positive and bounded by $6$, and hence the smaller eigenvalue is bounded from below by
$3(1-|\zeta|^2)/6=(1-|\zeta|^2)/2$. This yields the inequality \eqref{rk-coerc-1}.

(b) The proof uses the W-transformation of Hairer \& Wanner, see \cite[p.\,77]{HaiW}. For each of the $m$-stage Gauss and Radau IIA methods, there exists an invertible real $m\times m$ matrix $W$ with first column $\bone$ such that, with the diagonal matrix $\mathscr{B}$ of the weights $b_i$,
$$
W^T \mathscr{B} W=I_m
$$
or in other words, $W^T \mathscr{B}^{1/2}$ is an orthogonal matrix (with respect to the Euclidean inner product), and
$$
\mathscr{A}= WXW^{-1},
$$
where $X-\tfrac12 e_1e_1^T - \beta_m e_me_m^T$ is a skew-symmetric matrix with $\beta_m=0$ for the Gauss method and $\beta_m>0$ for the Radau IIA method.
We write
\begin{eqnarray*}
&&\Re (w,\Delta(\zeta)w) = \Re \overline w^T \mathscr{B} \Delta(\zeta) w 
\\
&&= \Re \overline w^T  \mathscr{B}^{1/2}\cdot \mathscr{B}^{1/2}W \cdot W^T \mathscr{B}^{1/2} \cdot \mathscr{B}^{1/2} \Delta(\zeta) \mathscr{B}^{-1/2} \cdot  \mathscr{B}^{1/2}W \cdot W^T \mathscr{B}^{1/2} \cdot
\mathscr{B}^{1/2} w 
\\
&&= \Re  \overline w^T  \mathscr{B}^{1/2}\cdot \mathscr{B}^{1/2}W \cdot 
W^T \mathscr{B} \Delta(\zeta) W \cdot W^T \mathscr{B}^{1/2} \cdot
\mathscr{B}^{1/2} w ,
\end{eqnarray*}
where we note
$$
W^T \mathscr{B} \Delta(\zeta) W = W^{-1}\Delta(\zeta) W = (W^{-1}\Delta(\zeta)^{-1} W)^{-1}.
$$
Now, by the definition of $\Delta(\zeta)$ and the above-mentioned property of $W^{-1}\mathscr{A}W=X$ together with $W^Tb=e_1$, the matrix $W^{-1}\Delta(\zeta)^{-1} W$ is the sum of a skew-hermitian matrix plus a rank-1 or rank-2 matrix for Gauss or Radau IIA methods, respectively, and by the Sherman-Morrison-Woodbury formula so is its inverse:
$$
W^T \mathscr{B} \Delta(\zeta) W = Y+Z(\zeta),
$$
where $Y$ is skew-hermitian and $Z(\zeta)$ is of rank 1 or 2 for Gauss or Radau IIA methods, respectively. If $w\ne 0$ is in the null-space of  $Z(\zeta)W^T\mathscr{B}$, which is of codimension 1 or 2 for Gauss or Radau, respectively, then we obtain from the above formulas that
$$
\Re (w,\Delta(\zeta)w) =0
$$
in contradiction to \eqref{rk-coerc}. 
\qed
\end{proof}

As we will show in Theorem~\ref{thm:rk-pos} below, Runge--Kutta convolution quadrature with \eqref{rk-coerc} preserves the coercivity property of Theorem~\ref{thm:cont_Herglotz} for arbitrary abscissa $\sigma \ge 0$, while general algebraically stable methods preserve it in the case $\sigma=0$.
Before we state and prove this theorem in Section~\ref{subsec:proof-rk-pos}, we need an  auxiliary result of independent interest.

\subsection{A matrix-function inequality related to a theorem by von Neumann}
We consider again a complex Hilbert space $V$ and its dual $V'$, with the anti-duality denoted by $\langle \cdot, \cdot \rangle$. On $\mathbb{C}^m$ we consider an inner product $(\cdot,\cdot)$ and associated norm $|\cdot|$. An inner product on $V^m$ and the anti-duality between $V^m$ and $(V')^m$ are induced in the usual way: for Kronecker products $a\otimes u$ and $b\otimes v$ with $a,b\in\mathbb{C}^m$ and $u,v\in V$ one defines
$( a\otimes u, b\otimes v) =(a,b)\, (u,v)$ and extends this to a sequilinear form on $V^m\times V^m$, and in the same way one proceeds for the anti-duality $\langle\cdot,\cdot\rangle$ on $V^m \times (V')^m$.

\begin{lemma} \label{lem:BS-pos}
On the Hilbert space $V$,
let $L(s): V \rightarrow V'$ and $R(s) : V \rightarrow V$ be analytic families of bounded linear operators for $\Re s > \sigma$, continuous for $\Re s \ge \sigma$, such that \eqref{eq:bound_B_s} is satisfied and for some $\alpha\ge 0$,
$$
\Re \langle v, L(s) v\rangle \geq \alpha \|R(s) v\|^2, \qquad \hbox{ for all } v \in V, \; \Re s \geq \sigma.
$$
Let the matrix $S\in \mathbb{C}^{m\times m}$ be such that
$$
\Re (w,Sw) \ge \sigma \, |w|^2, \qquad \hbox{ for all } w\in \mathbb{C}^m.
$$
Then,
$$
\Re \langle \vecb{v}, L(S) \vecb{v} \rangle \geq \alpha \|R(S) \vecb{v}\|^2, \qquad \hbox{ for all } \vecb{v} \in V^m.
$$
\end{lemma}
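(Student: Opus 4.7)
The plan is to reduce to $\sigma=0$ by translation, transfer the half-plane condition on $S$ to a contraction on $\mathbb{C}^m$ via a Cayley transform, and then invoke Sz.-Nagy's unitary dilation to pass from the pointwise coercivity on the unit circle to the desired matrix inequality. Concretely, with $\tilde L(s)=L(s+\sigma)$, $\tilde R(s)=R(s+\sigma)$, and $\tilde S=S-\sigma I$, the assumption $\Re(w,\tilde S w)\ge 0$ is equivalent to $\tilde S+\tilde S^*\ge 0$, so the Cayley transform $T=(\tilde S-I)(\tilde S+I)^{-1}$ is a contraction on $(\mathbb{C}^m,(\cdot,\cdot))$ by the identity
\begin{equation*}
 I-T^*T=2(\tilde S+I)^{-*}(\tilde S+\tilde S^*)(\tilde S+I)^{-1}\ge 0.
\end{equation*}
Under the M\"obius map $\phi(z)=(1+z)/(1-z)$ from the open unit disk onto the right half-plane, set $\hat L=\tilde L\circ\phi$ and $\hat R=\tilde R\circ\phi$; these are analytic operator-valued on the disk with $\hat L(T)=L(S)$ and $\hat R(T)=R(S)$ (by naturality of the holomorphic functional calculus), and they inherit the scalar-parameter coercivity $\Re\langle v,\hat L(z)v\rangle\ge\alpha\|\hat R(z)v\|^2$ for $|z|<1$, $v\in V$.

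The key step is to show that, for the contraction $T$, this disk coercivity implies the operator inequality $\Re\hat L(T)\ge\alpha\,\hat R(T)^*\hat R(T)$ on $V^m$. I would invoke Sz.-Nagy's dilation theorem: there exist a Hilbert space $\mathcal{H}\supset\mathbb{C}^m$ and a unitary $U$ on $\mathcal{H}$ such that $T^n=PU^nP^*$ for all $n\ge 0$, where $P:\mathcal{H}\to\mathbb{C}^m$ is orthogonal projection. Tensoring with $V$ extends the functional calculus to operator-valued $\hat L,\hat R$, giving $\hat L(T)=(P\otimes I)\hat L(U)(P^*\otimes I)$ and analogously for $\hat R$. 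The spectral theorem $U=\int_0^{2\pi}e^{i\phi}\,dE(\phi)$ then yields
\begin{equation*}
 \Re\hat L(U)-\alpha\,\hat R(U)^*\hat R(U)=\int_0^{2\pi}\bigl[\Re\hat L(e^{i\phi})-\alpha\,\hat R(e^{i\phi})^*\hat R(e^{i\phi})\bigr]\,dE(\phi)\ge 0,
\end{equation*}
because the bracketed integrand is pointwise positive semi-definite by the boundary coercivity. Compressing by $P\otimes I$, and using $P^*P\le I_{\mathcal{H}}$ to conclude $\hat R(T)^*\hat R(T)\le(P\otimes I)\hat R(U)^*\hat R(U)(P^*\otimes I)$, one obtains $\Re\hat L(T)\ge\alpha\,\hat R(T)^*\hat R(T)$, which is the desired inequality.

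The principal obstacle is the polynomial growth bound~\eqref{eq:bound_B_s}: $\hat L$ and $\hat R$ are unbounded at $z=1$ (the image of $s=\infty$), so their boundary values on the unit circle do not belong to the bounded functional calculus of $U$, and the spectral integral above does not literally converge if $1\in\operatorname{spec}(U)$. I would work around this by applying the dilation argument first to $L_k(s)=L(s)/s^k$ with $L_k$ bounded on $\Re s\ge 0$ (as in the construction of $\ell_k$ preceding Theorem~\ref{thm:cont_Herglotz}) and then recovering $L(T)=T^kL_k(T)$ via the product rule of the functional calculus, with an analogous treatment of $R$. Alternatively, since $S$ is finite-dimensional, one can first perturb $\tilde S\mapsto\tilde S+\varepsilon I$ so that $\operatorname{spec}(T_\varepsilon)$ lies strictly inside the open unit disk, where $\hat L$ and $\hat R$ are smooth, carry out the dilation/spectral argument at $T_\varepsilon$, and pass $\varepsilon\to 0^+$ by continuity of $L$ and $R$ on the closed half-plane.
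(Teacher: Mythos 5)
Your overall strategy --- Cayley transform to a contraction, Sz.-Nagy dilation, spectral positivity on the circle, compression --- is a genuinely different route from the paper's proof, which adapts Crouzeix's proof of von Neumann's theorem: there one joins $S$ to its skew-Hermitian part via $S(z)=\tfrac z2(S+S^*)+\tfrac12(S-S^*)$, verifies the inequality on the normal matrices $S(z)$ with $\Re z=0$, and propagates it to $z=1$ using the subharmonicity of $\alpha\|R(S(z))\vecb{v}\|^2-\Re\langle\vecb{v},L(S(z))\vecb{v}\rangle$ together with a Phragm\'en--Lindel\"of maximum principle. The algebraic skeleton of your argument is sound: the Cayley transform is a contraction for the inner product $(\cdot,\cdot)$, $\phi(T)=\tilde S$, and the compression identities together with $\hat R(T)^*\hat R(T)\le(P\otimes I)\hat R(U)^*\hat R(U)(P^*\otimes I)$ give the inequality the right way around. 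But the step you yourself flag as the principal obstacle is a genuine gap, and neither of your two workarounds closes it.

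The point $z=1$ is not removable by your devices, because the minimal unitary dilation $U$ of a non-unitary contraction on $\C^m$ has the whole unit circle in its spectrum, no matter whether $\operatorname{spec}(T)$ or even $\|T\|$ is pushed strictly inside the disk. Since $\hat L(e^{i\theta})=\tilde L(\phi(e^{i\theta}))$ grows like $|1-e^{i\theta}|^{-\mu}$ near $\theta=0$ by \eqref{eq:bound_B_s}, the operator $\hat L(U)$ and your spectral integral are ill-defined (or the dilated vector $(P^*\otimes I)\vecb{v}$ leaves the form domain) whenever $\mu>0$, which is the case of interest. Hence workaround (2) fails as stated: perturbing to $T_\varepsilon$ changes where $\operatorname{spec}(T_\varepsilon)$ sits, but not where $U_\varepsilon$ lives. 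Workaround (1) fails as well: writing $L(s)=s^kL_k(s)$ merely relocates the singularity into the factor $\phi(z)^k$, which is equally unbounded at $z=1$; the coercivity hypothesis does not transfer from $L$ to $L_k$; and the compression $f\mapsto Pf(U)P^*$ is not multiplicative, so $L(T)$ cannot be reassembled from compressed pieces. The argument can be repaired: after the $\varepsilon$-perturbation, replace $\hat L,\hat R$ by $\hat L_r(z)=\hat L(rz)$, $\hat R_r(z)=\hat R(rz)$ with $r<1$, which are analytic on a neighbourhood of the closed disk, still satisfy the boundary coercivity because $\Re\phi(re^{i\theta})>0$, and converge to $\hat L(T_\varepsilon),\hat R(T_\varepsilon)$ as $r\to1^-$ since $\operatorname{spec}(T_\varepsilon)$ is a compact subset of the open disk. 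That extra layer is precisely what the paper's subharmonicity argument avoids: the Phragm\'en--Lindel\"of principle absorbs the polynomial growth \eqref{eq:bound_B_s} at $s=\infty$ for free.
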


This result can be viewed as an extension of a theorem of von Neumann \cite{Neu} (see also \cite[p.\,179]{HaiW}), which corresponds to the particular case where $L(s)$ is the identity operator on $V$ (when $V$ is identified with $V'$ with the anti-duality given by the inner product on $V$).

\begin{proof} The proof adapts Michel Crouzeix's proof of von Neumann's theorem as given in \cite[p.\,179\,f.]{HaiW}. Without loss of generality we assume here $\sigma =0$. 

First we note that for a {\it diagonal} matrix $S$ the result holds trivially, and so it does for a {\it normal} matrix $S$, which is diagonalised by a similarity transformation with a unitary matrix.

For a non-normal matrix $S$ we consider the matrix-valued complex function
$$
S(z) = \frac z2 (S+S^*) + \frac 12 (S-S^*)
$$
and we observe that $S=S(1)$ and
$$
\Re (w,S(z)w) = \tfrac12 (\Re z) \,\Re (w,Sw).
$$
Together with the condition on $S$ this shows that the numerical range of $S(z)$ is in the right complex half-plane for $\Re z \ge 0$, and hence all eigenvalues of $S(z)$ have non-negative real part. Therefore, the operator functions $L(S(z))$ and $R(S(z))$ are well-defined for $\Re z \ge 0$.

If $\Re z = 0$, then the matrix $S(z)$ is normal, and hence the desired inequality is valid for $S(z)$ with $\Re z=0$. The function
$$
\varphi(z) = \alpha \| R(S(z)) \vecb{v} \|^2 - \Re \langle \vecb{v}, L(S(z))\vecb{v} \rangle
$$
is subharmonic, since the last term is harmonic as the real part of an analytic function and the first term is the inner product of an analytic function with itself, which is subharmonic (as is readily seen by computing the Laplacian and noting that the real and imaginary parts of the analytic function are harmonic). Hence, by the maximum principle (or its Phragm\'en-Lindel\"of-type extension to polynomially bounded subharmonic functions on the half-plane),
$$
\varphi(1) \le \sup_{\Re z = 0} \varphi(z) \le 0,
$$
which is the desired inequality. 
\qed
\end{proof}

\begin{remark} There is a slightly weaker variant of Lemma~\ref{lem:BS-pos}. We formulate it for $\sigma=0$. Let $L(s): V \rightarrow V'$ and $R(s) : V \rightarrow V$ be analytic families of bounded linear operators for $\Re s > 0$, continuous for $s=0$, such that \eqref{eq:bound_B_s} is satisfied and for some $\alpha\ge 0$,
$$
\Re \langle v, L(s) v\rangle \geq \alpha \|R(s) v\|^2, \qquad \hbox{ for all } v \in V, \; \Re s > 0.
$$
Let the matrix $S\in \mathbb{C}^{m\times m}$ be such that all its eigenvalues either have positive real part or are zero, and
$$
\Re (w,Sw) \ge 0, \qquad \hbox{ for all } w\in \mathbb{C}^m.
$$
Then,
$$
\Re \langle \vecb{v}, L(S) \vecb{v} \rangle \geq \alpha \|R(S) \vecb{v}\|^2, \qquad \hbox{ for all } \vecb{v} \in V^m.
$$
This is proved by continuity, using the previous result for $S+\eps I$ and letting $\eps\to 0$.
\end{remark}

\subsection{Preserving the convolution coercivity under discretisation} \label{subsec:proof-rk-pos}

\begin{theorem}\label{thm:rk-pos} Let the $m$-stage Runge--Kutta method satisfy \eqref{rk-coerc} for some inner product $(\cdot,\cdot)$, as in particular is the case for the two-stage Radau IIA method.  In the situation of Theorem~\ref{thm:cont_Herglotz}, condition 1.\ of that theorem implies, for sufficiently small stepsize $\dt>0$ and with  $\widetilde \sigma = \sigma/c$,
\[
\dt\sum_{n = 0}^\infty e^{-2\widetilde \sigma n\dt} \Re \langle \vecb{f}_n, (L(\vecb{\partial}_t^{\dt})\vecb{f})_n \rangle \geq \alpha \dt\sum_{n = 0}^\infty e^{-2\widetilde \sigma n\dt} \|(R(\vecb{\partial}_t^{\dt})\vecb{f})_n\|^2,
\]
for every sequence $\vecb{f}=(\vecb{f}_n)_{n\ge 0}$ in $V^m$ with finitely many non-zero entries. Moreover, in the case $\sigma=0$  this inequality holds for every algebraically stable Runge--Kutta method, with $\widetilde\sigma=0$ and with respect to the $b$-weighted inner product \eqref{b-inner-product} on $\C^m$.
\end{theorem}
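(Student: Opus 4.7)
The plan is to prove the inequality by a Parseval/contour-integral argument on a shifted circle, reducing the discrete convolution sums to an integral over $|\zeta|=\rho$ with $\rho=e^{-\widetilde\sigma\dt}$, and then invoke Lemma~\ref{lem:BS-pos} pointwise with the matrix argument $S=\Delta(\zeta)/\dt$.

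First I would introduce the generating functions $\hat{\vecb f}(\zeta)=\sum_{n\ge 0}\vecb f_n\zeta^n$ and $\hat{\vecb u}(\zeta)=\sum_{n\ge 0}(L(\vecb\partial_t^{\dt})\vecb f)_n\zeta^n$; by the definition of the convolution quadrature and the Cauchy product of power series these are linked by $\hat{\vecb u}(\zeta)=L(\Delta(\zeta)/\dt)\hat{\vecb f}(\zeta)$. Since $\vecb f$ has finite support, both series are entire polynomials in $\zeta$, so Parseval on the circle of radius $\rho$ gives
\[
\dt\sum_{n\ge 0}\rho^{2n}\langle\vecb f_n,\vecb u_n\rangle
=\frac{\dt}{2\pi}\int_0^{2\pi}\big\langle\hat{\vecb f}(\rho e^{i\theta}),\,L(\Delta(\rho e^{i\theta})/\dt)\hat{\vecb f}(\rho e^{i\theta})\big\rangle\,d\theta,
\]
and an analogous identity holds for $\dt\sum_n\rho^{2n}\|(R(\vecb\partial_t^{\dt})\vecb f)_n\|^2$ with $R$ replacing $L$ and a squared norm. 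With $\rho=e^{-\widetilde\sigma\dt}$ the exponential weight $e^{-2\widetilde\sigma n\dt}$ becomes $\rho^{2n}$, so it suffices to prove the desired coercivity pointwise in $\zeta$ on that circle.

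Next I would check the hypothesis of Lemma~\ref{lem:BS-pos} for $S=\Delta(\zeta)/\dt$. For the first part of the theorem, inequality \eqref{rk-coerc} applied with $\delta=\widetilde\sigma\dt=\sigma\dt/c$ (which is small for small $\dt$) yields, for all $w\in\C^m$ and $|\zeta|\le\rho=e^{-\delta}$,
\[
\Re\bigl(w,(\Delta(\zeta)/\dt)w\bigr)=\frac{1}{\dt}\Re(w,\Delta(\zeta)w)\ge\frac{c\delta}{\dt}|w|^2=\sigma\,|w|^2,
\]
which is exactly the assumption on $S$ in Lemma~\ref{lem:BS-pos} with parameter $\sigma$. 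That lemma, applied to the given $L(s)$ and $R(s)$, then produces
\[
\Re\bigl\langle\hat{\vecb f}(\zeta),L(\Delta(\zeta)/\dt)\hat{\vecb f}(\zeta)\bigr\rangle\ge\alpha\bigl\|R(\Delta(\zeta)/\dt)\hat{\vecb f}(\zeta)\bigr\|^2
\]
pointwise on $|\zeta|=\rho$. Integrating over $\theta$ and using the Parseval identities in both directions gives the claimed weighted discrete inequality.

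For the second part (arbitrary algebraically stable method, $\sigma=0$, $\widetilde\sigma=0$), the plan is identical but using Lemma~\ref{lem:rk-nonneg}, which gives $\Re(w,\Delta(\zeta)w)\ge 0$ for $|\zeta|<1$. To avoid the singularity of $\Delta(\zeta)$ at $\zeta=1$ when integrating on the unit circle, I would carry out the Parseval argument on $|\zeta|=\rho$ with $\rho<1$, apply the Remark after Lemma~\ref{lem:BS-pos} (or equivalently Lemma~\ref{lem:BS-pos} itself with $\sigma=0$, since the spectrum of $\Delta(\zeta)/\dt$ then lies in the closed right half-plane), and pass to $\rho\uparrow 1$; since $\vecb f$ has only finitely many nonzero entries both sides of the inequality depend continuously on $\rho$ and the limit is immediate. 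The main obstacle I anticipate is this boundary issue on $|\zeta|=1$ in the algebraically stable case, and making the application of Lemma~\ref{lem:BS-pos} clean in the degenerate case $\sigma=0$; everything else is a matter of combining Parseval with the two lemmas of the preceding subsections.
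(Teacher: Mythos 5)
Your proposal follows essentially the same route as the paper's proof: Parseval's formula on the circle of radius $\rho=e^{-\widetilde\sigma\tau}$ combined with a pointwise application of Lemma~\ref{lem:BS-pos} to $S=\Delta(\rho e^{i\theta})/\tau$, whose hypothesis is verified via \eqref{rk-coerc} (resp.\ Lemma~\ref{lem:rk-nonneg} in the $\sigma=0$ case). The differences are cosmetic: your explicit $\rho\uparrow 1$ limit for $\sigma=0$ sensibly makes precise what the paper leaves implicit, and the only small inaccuracy is calling $\hat{\vecb u}(\zeta)=L(\Delta(\zeta)/\tau)\hat{\vecb f}(\zeta)$ a polynomial (it is a power series convergent for $|\zeta|<1$), which does not affect the argument.
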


\begin{proof}
The proof uses Parseval's formula and combines Lemma~\ref{lem:BS-pos} with Lemmas \ref{lem:rk-nonneg} and \ref{lem:rk-coerc}.
By \eqref{rk-coerc}, with $\widetilde\sigma=\sigma/c$ and $\rho=e^{-\widetilde\sigma\dt}$  we have with respect to the inner product weighted by the $b_i$ that
\begin{equation}\label{Delta-pos}
\Re \Bigl( w,\frac{\Delta(\rho e^{i\theta})}{\dt}w \Bigr) \ge \sigma |w|^2 \qquad \hbox{ for all } w\in \C^m, \ \theta\in \R.
\end{equation}
We abbreviate
$$
\widehat {\matr{L}}(\theta) = L\Bigl( \frac{\Delta(\rho e^{i\theta})}{\dt} \Bigr)
$$
and similarly $\widehat {\matr{R}}(\theta)$. We denote the Fourier series
$$
\widehat {\vecb{f}}(\theta) = \sum_{n=0}^\infty \rho^n e^{in\theta} \vecb{f}_n.
$$
By Parseval's formula and the definition of the convolution quadrature weights $\matr{W}_n(L)$,
\[
\sum_{n = 0}^\infty  \Re\langle \rho^n \vecb{f}_n , \sum_{j = 0}^n \rho^{n-j} \matr{W}_{n-j} (L)\rho^j \vecb{f}_j\rangle = \frac  1{2\pi}\int_{-\pi}^\pi \Re \langle  \widehat {\vecb{f}}(\theta), \widehat {\matr{L}}(\theta)  \widehat {\vecb{f}}(\theta) \rangle d\theta.
\]
Here \eqref{Delta-pos} used in Lemma~\ref{lem:BS-pos} yields
$$
\Re \langle  \widehat {\vecb{f}}(\theta), \widehat {\matr{L}}(\theta)  \widehat {\vecb{f}}(\theta) \rangle \ge \alpha \|\widehat {\matr{R}}(\theta) \widehat {\vecb{f}}(\theta)\|^2.
$$
Moreover, again by Parseval's formula,
\[
\frac 1{2\pi}\int_{-\pi}^\pi \|\widehat {\matr{R}}(\theta) \widehat {\vecb{f}}(\theta)\|^2 d\theta = \sum_{n = 0}^\infty \rho^{2n} \Bigl\|\sum_{j = 0}^n \matr{W}_{n-j}(R) \vecb{f}_j\Bigr\|^2,
\]
which yields the result. 
\qed
\end{proof}

\section{Error bounds of Runge--Kutta convolution quadrature}
\label{sec:conv}
In this section we restate the result of \cite{BanLM} and extend it to cover the approximation properties of the internal stages, which will be needed in the next section. To avoid restating the list of properties required for the underlying Runge--Kutta method, we state the results just for the Radau IIA methods, which appear to be the practically most important class of Runge--Kutta methods to be used for convolution quadrature.

Let $K(s)$, for  $\Re s > \sigma > 0$, be an analytic family of operators between Hilbert spaces $V$ and $W$ (or just Banach spaces are sufficient here), such
that for some real exponent $\mu$ and $\nu \geq 0$ the operator norm is bounded as follows:
\begin{equation}
\label{K-bounds}
\| K(s)\| \leq M(\sigma) \frac{|s|^\mu}{(\Re s)^\nu } \quad\text{ for all }
\Re s >  \sigma.
\end{equation}

\begin{theorem}\cite[Theorem 3]{BanLM}
\label{thm:err-rkcq} 
Let $K$ satisfy \eqref{K-bounds} and consider the Runge-Kutta 
convolution quadrature based on the Radau IIA method with $m$ stages.
Let $r > \max(2m-1+\mu, 2m-1, m+1)$ and $f \in C^r([0,T],V)$ satisfy 
$f(0) = f'(0) = \ldots = f^{(r-1)}(0) = 0$. Then, there exists $\bar{\dt}> 0$  such that for $0 < \dt \leq \bar{\dt}$ and $t_n=n\dt \in [0,T]$,
\begin{align*}
&\| e_m^T \bigl(K(\vecb{\partial}_t^\dt)\vecb{f}\bigr)_n - \bigl(K(\partial_t)f\bigr)(t_{n+1})  \|
\\
& \qquad \leq C
\,h^{\min(2m-1,m+1-\mu+\nu)}\,\Bigl(\|f^{(r)}(0)\|+\int_0^t \|f^{(r+1)}(\tau)\| d\tau\Bigr).
\end{align*}
The constant $C$ is independent of $\dt$ and $f$, but does depend on $\bar{\dt}$,  $T$, and the constants in  \eqref{K-bounds}.
\end{theorem}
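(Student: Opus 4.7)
The plan is to follow the contour-integral approach to convolution quadrature error analysis, which for Runge--Kutta methods was worked out in \cite{BanLM}. First, I would represent both the exact convolution $\bigl(K(\partial_t)f\bigr)(t_{n+1})$ and its discrete approximation as inverse Laplace integrals. Under the smoothness and vanishing-initial-data assumptions on $f$, the exact value admits the Bromwich representation
\[
\bigl(K(\partial_t)f\bigr)(t_{n+1}) = \frac{1}{2\pi\mi} \int_{\sigma'+\mi\R} e^{s t_{n+1}}\, K(s)\,\widehat f(s)\,ds,
\]
while the generating-function identity $K(\Delta(\zeta)/\dt) = \sum_{n\ge 0} \matr{W}_n(K)\zeta^n$ together with Cauchy's integral formula expresses $e_m^T\bigl(K(\vecb{\partial}_t^\dt)\vecb{f}\bigr)_n$ as an integral over a circle $|\zeta|=\rho<1$, which the substitution $\zeta = e^{-\dt s}$ turns into an integral on a vertical line parallel to the Bromwich contour.

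Subtracting the two representations, the error becomes a single contour integral whose integrand factors as a symbol-difference times $\widehat f(s)$. The symbol-difference estimate rests on two properties of the $m$-stage Radau IIA differentiation symbol: the classical order $2m-1$ gives the consistency estimate $\Delta(e^{-\dt s})/\dt - s = O(\dt^{2m-1}\, s^{2m})$ for bounded $\dt s$, while A-stability together with the stage order $m+1$ give a uniform bound $\|K(\Delta(\zeta)/\dt)\| \le C\dt^{-\mu}$ for large $|\dt s|$, with an additional gain of $\dt^\nu$ coming from the $(\Re s)^{-\nu}$ weight in \eqref{K-bounds}. After composing with $K$ and using \eqref{K-bounds}, the integrand is bounded pointwise in two regimes: by $C\dt^{2m-1}|s|^{\mu+2m-1}/(\Re s)^\nu$ on the low-frequency part $|\dt s|\lesssim 1$, and by $C\dt^{m+1-\mu+\nu}|s|^{\mu+m+1}/(\Re s)^\nu$ on the complementary part, and the minimum of the two orders gives the exponent of $h$ in the claimed bound.

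The smoothness hypothesis on $f$ enters through integration by parts: the conditions $f^{(j)}(0)=0$ for $j<r$ give $\widehat f(s) = \widehat{f^{(r)}}(s)/s^r$, and one further integration by parts on $[0,t]$ produces $\|\widehat f(s)\| \le C\bigl(\|f^{(r)}(0)\| + \int_0^t \|f^{(r+1)}(\tau)\|\,d\tau\bigr)/|s|^{r+1}$ on any vertical contour. The assumption $r > \max(2m-1+\mu, 2m-1, m+1)$ is precisely what makes the resulting contour integrals absolutely convergent once the symbol-difference bound is inserted, and choosing the abscissa $\Re s = 1/t_{n+1}$ balances the polynomial growth in $|s|$ against the $(\Re s)^{-\nu}$ factor so that the $t$-dependence is absorbed into a $T$-dependent constant. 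The main obstacle I anticipate is tracking the two exponents $\mu$ and $\nu$ separately: the improvement from the basic stage-order rate $m+1-\mu$ to the sharper $m+1-\mu+\nu$ requires exploiting the $(\Re s)^{-\nu}$ weight via a Paley--Wiener-type argument on the contour rather than a crude supremum bound, and getting this right is the delicate step that forms the core technical contribution of \cite{BanLM}.
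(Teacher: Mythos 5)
The paper itself contains no proof of this statement: Theorem~\ref{thm:err-rkcq} is quoted verbatim from \cite[Theorem 3]{BanLM}, so your sketch can only be measured against the argument of that reference. Your overall architecture --- contour-integral representations of both the exact and the discrete convolution, a two-regime estimate (classical order for $|\dt s|\lesssim 1$, stability plus stage order elsewhere), and the reduction of the data term to $\|f^{(r)}(0)\|+\int_0^t\|f^{(r+1)}\|$ via $\widehat f(s)=s^{-r}\widehat{f^{(r)}}(s)$ and one further integration by parts --- is indeed the architecture of the proof in \cite{BanLM} and its companion \cite{BanL}, and the bookkeeping of the exponents ($r>2m-1+\mu$ for absolute convergence of the low-frequency part, the gain of $\dt^\nu$ from the $(\Re s)^{-\nu}$ weight) is consistent with the statement.

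There is, however, a genuine gap at the central step. The claimed consistency estimate $\Delta(e^{-\dt s})/\dt-s=O(\dt^{2m-1}s^{2m})$ is false for $m\ge 2$: the matrix $\Delta(e^{-\dt s})/\dt$ approximates $sI$ (acting on the vector of exponential samples $(e^{sc_i\dt})_i$) only to the \emph{stage order}, which for the $m$-stage Radau IIA method is $m$, not $m+1$ --- the exponent $m+1$ in the theorem is ``stage order plus one'', as the paper itself notes before Theorem~\ref{thm:err-rkcq-int}. The classical order $2m-1$ is a superconvergence effect that appears only in the output functional $e_m^T(\cdot)$ (equivalently $b^T\mathscr{A}^{-1}(\cdot)$, by stiff accuracy) after accumulation over time steps; it cannot be read off from a norm bound on the symbol difference, and a proof built literally on your low-frequency bound would cap the rate at the stage order rather than at $2m-1$. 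In \cite{BanL,BanLM} this is circumvented by representing $K(\partial_t)f$ and $K(\vecb{\partial}_t^\dt)\vecb{f}$ as contour integrals of $K(\lambda)$ against $(\partial_t-\lambda)^{-1}f$ and $(\vecb{\partial}_t^\dt-\lambda)^{-1}\vecb{f}$, i.e.\ against the exact and Runge--Kutta solutions of $y'=\lambda y+f$, $y(0)=0$, and then invoking sharp $\lambda$-explicit error bounds for that ODE in which the order $2m-1$ is extracted from the classical order theory (simplifying assumptions and the recursion for the global error). Two smaller points: the extra factor $\dt^{\nu}$ comes concretely from the fact that for $|\zeta|\le\rho<1$ the eigenvalues of $\Delta(\zeta)/\dt$ have real parts bounded below by $c/\dt$, so that $(\Re)^{-\nu}$ in \eqref{K-bounds} yields $O(\dt^{\nu})$ --- no Paley--Wiener-type argument is needed; and the abscissa is taken fixed (of order $1/T$), not $1/t_{n+1}$.
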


The proof in \cite{BanLM} is readily extended to yield the following error bound for the internal stages. Note that here the full order $2m-1$ is replaced by the stage order plus one, $m+1$. We give the result for $m\ge 2$ stages, so that $m+1\le 2m-1$. (For $m=1$, the implicit Euler method, one can use the previous result.)

\begin{theorem}
\label{thm:err-rkcq-int} 
Let $K$ satisfy \eqref{K-bounds} and consider the Runge-Kutta 
convolution quadrature based on the Radau IIA method with $m\ge 2$ stages.
Let $r > \max(m+1+\mu,  m+1)$ and $f \in C^r([0,T],V)$ satisfy 
$f(0) = f'(0) = \ldots = f^{(r-1)}(0) = 0$. Then, there exists $\bar{\dt}> 0$  such that for $0 < \dt \leq \bar{\dt}$ and $t_n=n\dt \in [0,T]$,
\begin{align*}
&\| \bigl(K(\vecb{\partial}_t^\dt)\vecb{f}\bigr)_n - \bigl(K(\partial_t)f(t_{n}+c_i\dt)\bigr)_{i=1}^m  \|
\\
& \qquad \leq C
\,h^{\min(m+1,m+1-\mu+\nu)}\,\Bigl(\|f^{(r)}(0)\|+\int_0^t \|f^{(r+1)}(\tau)\| d\tau\Bigr).
\end{align*}
The constant $C$ is independent of $\dt$ and $f$, but does depend on $\bar{\dt}$,  $T$, and the constants in  \eqref{K-bounds}.
\end{theorem}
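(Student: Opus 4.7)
The plan is to follow the proof of Theorem~\ref{thm:err-rkcq} in \cite{BanLM} essentially verbatim, modifying only one local estimate and doing the corresponding bookkeeping for the reduced convergence order and the correspondingly reduced regularity requirement. The architecture of that proof writes the convolution-quadrature error as an inverse Laplace contour integral of the schematic form
\[
\bigl(K(\vecb{\partial}_t^\dt)\vecb{f}\bigr)_n - \bigl(K(\partial_t)f(t_n+c_i\dt)\bigr)_{i=1}^m = \frac{1}{2\pi\mi}\int_{\sigma'+\mi\R} K(s)\,\vecb{E}(s,\dt,n)\,\widehat f(s)\,ds ,
\]
where $\vecb{E}(s,\dt,n)\in\C^m$ is the vector of local errors produced by the $m$-stage Runge--Kutta scheme applied to the scalar linear test problem $y'=sy$ forced by $f$. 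The $|s|^\mu/(\Re s)^\nu$ bound of \eqref{K-bounds} controls $K(s)$, an integration-by-parts argument based on $f^{(j)}(0)=0$ shifts enough derivatives onto $f$ (giving rise to the $\widehat{f^{(r)}}(s)$ factor in disguise) to make the integral absolutely convergent, and the whole estimate reduces to a uniform-in-$s$ bound on $\vecb{E}(s,\dt,n)$ along the contour.

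The only place where the classical order $2m-1$ of Radau IIA enters the proof in \cite{BanLM} is in bounding the \emph{step-value} error $e_m^T\vecb{E}(s,\dt,n)$, which enjoys superconvergence $O(\dt^{2m-1})$ in the non-stiff regime. For the full vector $\vecb{E}(s,\dt,n)$ one cannot expect better than the stage order plus one. The $m$-stage Radau IIA scheme satisfies the simplifying condition $C(m)$, and a standard Taylor expansion for the scalar test equation yields $|\vecb{E}(s,\dt,n)|\leq C\,\dt^{m+1}\,\omega(s\dt)$ with the same function $\omega$ of $|s\dt|$ that appears in the step-value analysis of \cite{BanLM}. Substituting $2m-1\mapsto m+1$ in this single place leaves the balancing of $|s|^\mu/(\Re s)^\nu$ against the stepsize factor untouched and produces the exponent $\min(m+1,\,m+1-\mu+\nu)$. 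The revised regularity hypothesis $r>\max(m+1+\mu,\,m+1)$ is the direct translation of the new, lower, order into the number of vanishing derivatives of $f$ required to make the contour integral absolutely convergent; the counting is identical to that in \cite{BanLM}.

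Concretely, I would import the notation and the contour-deformation setup of \cite[Sec.~3]{BanLM} unchanged, drop the $e_m^T$-contraction from all expressions, and substitute the componentwise stage estimate in place of the superconvergent step estimate at the one point where it is needed. The main, and essentially the only non-routine, point is the verification of the sharp stage bound on $\vecb{E}(s,\dt,n)$ uniformly in the stiff regime $|s\dt|\to\infty$ encountered on the contour, with the correct polynomial growth in $|s\dt|$. This rests on the A-stability of Radau IIA together with $R(\infty)=0$, and on the Sherman--Morrison representation for $\Delta(\zeta)$ recorded in Section~\ref{subsec:rkcq}; the computation mirrors the step-value proof in \cite{BanLM} with the final extraction by $e_m^T$ simply removed.
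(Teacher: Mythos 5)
Your proposal is correct and coincides with the paper's intent: the paper gives no written proof of Theorem~\ref{thm:err-rkcq-int}, stating only that the argument of \cite{BanLM} ``is readily extended'' by replacing the superconvergent step-value order $2m-1$ with the stage order plus one, $m+1$ --- which is exactly the single substitution you identify, together with the matching adjustment of the regularity requirement to $r>\max(m+1+\mu,m+1)$. Your account of where that substitution enters (the bound on the full local-error vector rather than its $e_m^T$-contraction, with the balancing against $|s|^\mu/(\Re s)^\nu$ untouched) is the right reading of the \cite{BanLM} proof architecture.
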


\section{Application to the time discretisation of the wave equation with a non-linear impedance boundary condition}
\label{sec:app}

\subsection{A non-linear scattering problem}
We consider the wave equation on an exterior smooth domain $\Omega^+\subset\R^3$. Following \cite{BanR}, we search for a function $u(\cdot,t) \in H^1(\Omega^+)$ satisfying the  weak form of the wave equation
\begin{equation} \label{wave-eq}
\ddot u = \Delta u \qquad \hbox{ in }\Omega^+
\end{equation}
with zero initial conditions and
with the non-linear boundary condition
\begin{equation}\label{bc}
\partial_\nu^+ u = g(\dot u + \dot u^{inc}) - \partial_\nu^+ u^{inc}  \qquad \hbox{ on }\Gamma,\end{equation}
where $\partial_\nu^+$ is the outer normal derivative on the boundary $\Gamma$ of $\Omega^+$, where $g:\R\to\R$ is a given monotonically increasing function, and where $u^{inc}(x,t)$ is a given solution of the wave equation. The interpretation is that the total wave $u^{tot}=u+u^{inc}$ is composed of the incident wave $u^{inc}$ and the unknown scattered wave $u$.

One approach to solve this exterior problem is to determine the Dirichlet and boundary data from boundary integral equations on $\Gamma$ and then to compute the solution at points of interest $x\in\Omega^+$ from the Kirchhoff representation formula. Here we are interested in the stability and convergence properties of the numerical approximation when the time discretisation in the boundary integral equation and in the representation formula is done by Runge--Kutta convolution quadrature. Since our interest in this paper is the aspect of time discretisation, we will not address the space discretisation by boundary elements, though with some effort this could also be included; cf.~\cite{BanR}.

\subsection{Boundary integral equation and representation formula}

Using the standard notation of the boundary integral operators for the Helmholtz equation $s^2 u-\Delta u=0$ $(\Re s>0)$ as used, for example, in \cite{LalS,Say16} and \cite{BanLS,BanR}, we denote by 
$$
S(s):H^{-1/2}(\Gamma) \to H^1(\R^3\setminus\Gamma) \quad \hbox{ and } \quad
D(s):H^{1/2}(\Gamma) \to H^1(\R^3\setminus\Gamma)
$$ 
the single-layer and double-layer potential operators, respectively, and by $V(s)$, $K(s)$, $K^T(s)$, $W(s)$ the corresponding boundary integral operators that form the Calder\'on operator
\begin{equation} \label{B-def}
B(s) = \begin{pmatrix} sV(s) & K(s) \\ -K^T(s) & s^{-1} W(s)
\end{pmatrix} \ : \ H^{-1/2}(\Gamma) \times H^{1/2}(\Gamma) \to H^{1/2}(\Gamma) \times H^{-1/2}(\Gamma)
\end{equation}
and the related operator
\begin{equation} \label{B-imp}
B_{\mathrm{imp}}(s) = B(s) + \begin{pmatrix} 0 & -\frac12 I \\ \frac12 I & 0
\end{pmatrix},
\end{equation}
where the suffix imp stands for impedance. With these operators, the solution $u$ is determined by first solving, for 
$$
\varphi=-\partial_\nu^+u\quad\text{and}\quad\psi=\gamma^+\dot u
$$ 
(where $\gamma^+$ is the trace operator on $\Omega^+$), the time-dependent boundary integral equation (see \cite{BanR})
\begin{equation}\label{bie}
B_{\mathrm{imp}}(\partial_t) \begin{pmatrix} \varphi \\ \psi \end{pmatrix} +
\begin{pmatrix} 0 \\ g(\psi + {\dot u}^{inc}) \end{pmatrix} =
\begin{pmatrix} 0 \\  \partial_\nu^+ u^{inc} \end{pmatrix}.
\end{equation}
The solution $u$ is then obtained from the representation formula
\begin{equation}\label{rep}
u = S(\partial_t)\varphi + D(\partial_t)\partial_t^{-1}\psi.
\end{equation}
We will address the question as to what are the approximation properties when the temporal convolutions in \eqref{bie} and \eqref{rep} are discretised by Runge--Kutta convolution quadrature. Since this will not turn out fully satisfactory, we will further consider time-differentiated versions of \eqref{bie}.

The following coercivity property was proved in \cite{BanLS}.

\begin{lemma} \cite[Lemma 3.1]{BanLS} \label{lem:calderon-coerc} Let
$\langle\cdot,\cdot\rangle_\Gamma$ denote the anti-duality pairing between $H^{-1/2}(\Gamma) \times H^{1/2}(\Gamma)$ and $H^{1/2}(\Gamma) \times H^{-1/2}(\Gamma)$.
There exists $\beta > 0$ so that the Calder\'on operator \eqref{B-def} satisfies
\[
\Re \left \langle
  \begin{pmatrix}
      \varphi \\ \psi
  \end{pmatrix}, B(s) 
  \begin{pmatrix}
      \varphi \\ \psi
  \end{pmatrix}
\right \rangle_\Gamma
 \geq \beta\,c_\sigma\, \left(\|s^{-1}\varphi\|^2_{H^{-1/2}(\Gamma)} + \|s^{-1}\psi\|^2_{H^{1/2}(\Gamma)}\right)
\]
for  $\Re s \ge \sigma> 0$ and for all $\varphi \in H^{-1/2}(\Gamma)$ and $\psi \in H^{1/2}(\Gamma)$,
with $c_\sigma= \min(1,\sigma^2) \,\sigma$.
 \end{lemma}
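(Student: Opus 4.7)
The natural approach is the classical energy method: convert the boundary form $\Re\langle(\varphi,\psi),B(s)(\varphi,\psi)\rangle_\Gamma$ into an $L^2$-energy integral over $\R^3\setminus\Gamma$ using the layer potentials, and then transfer back to the boundary via $|s|$-weighted trace inequalities.

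Given $(\varphi,\psi)$, I would define $u_1 = S(s)\varphi$ and $u_2 = D(s)\psi$; both lie in $H^1(\R^3\setminus\Gamma)$ and satisfy $\Delta u_i = s^2 u_i$. The jump/average relations for the single and double layer potentials give $[\partial_\nu u_1] = -\varphi$, $\{\gamma u_1\} = V(s)\varphi$, $[\gamma u_2] = \psi$ and $\{\partial_\nu u_2\} = -W(s)\psi$ (in the sign convention that makes $W(s)$ positive near $s=0$). Summing Green's first identity $\int_\Omega(|\nabla u|^2 + s^2|u|^2) = \int_{\partial\Omega}\overline{\gamma u}\,\partial_\nu u$ over $\Omega^-$ and $\Omega^+$---with the opposite outer-normal orientation in the exterior absorbed into the sign---yields the scalar identities
\[
\langle V(s)\varphi,\varphi\rangle_\Gamma = \int_{\R^3\setminus\Gamma}\!(|\nabla u_1|^2 + s^2|u_1|^2)\,dx, \quad \langle\psi,W(s)\psi\rangle_\Gamma = \int_{\R^3\setminus\Gamma}\!(|\nabla u_2|^2 + s^2|u_2|^2)\,dx.
\]
Multiplying the first by $s$ and the second by $s^{-1}$ and taking real parts produces
\[
\Re[s\langle\varphi,V(s)\varphi\rangle] = \sigma\,\|u_1\|_{|s|}^2, \qquad \Re[s^{-1}\langle\psi,W(s)\psi\rangle] = \tfrac{\sigma}{|s|^2}\,\|u_2\|_{|s|}^2,
\]
with $\|w\|_{|s|}^2 := \|\nabla w\|^2+|s|^2\|w\|^2$. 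The off-diagonal cross-term $\Re[\langle\varphi,K(s)\psi\rangle - \langle\psi,K^T(s)\varphi\rangle]$, which vanishes for real $s$ by the $L^2$-transpose relation between $K$ and $K^T$ but need not vanish for complex $s$, is extracted by applying the same Green identity to $u=u_1+u_2$ and subtracting the two single-potential identities, and then controlled by Cauchy--Schwarz and Young's inequality against the positive diagonal terms.

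To return to boundary norms I would invoke the $|s|$-weighted trace estimates for Helmholtz solutions,
\[
\|\gamma w\|_{H^{1/2}(\Gamma)}^2 \lesssim \|w\|_{|s|}^2, \qquad \|\partial_\nu w\|_{H^{-1/2}(\Gamma)}^2 \lesssim (1+|s|^2)\,\|w\|_{|s|}^2,
\]
the extra $|s|^2$ in the Neumann trace bound arising from $\|\Delta w\|_{L^2} = |s|^2\|w\|_{L^2}$ plugged into the standard $H(\mathrm{div})$-trace inequality. Applied to $u_1$ and $u_2$, combined with the energy lower bounds just derived, and divided by $|s|^2$ to pass from $\|\varphi\|,\|\psi\|$ to $\|s^{-1}\varphi\|,\|s^{-1}\psi\|$, a careful chase of the $|s|$- and $\sigma$-powers produces the claimed prefactor $c_\sigma = \min(1,\sigma^2)\sigma$.

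\textbf{Main obstacle.} The delicate point is the emergence of the precise factor $\min(1,\sigma^2)\sigma$: the energy identity contributes a single power of $\sigma$ from $\Re s$, while the Neumann-trace inequality pays up to two extra powers of $|s|$; in the low-frequency regime $\sigma\le 1$, one must exploit $\sigma \le |s|$ and split the target norms carefully to recover a cubic-in-$\sigma$ constant rather than a single power. A secondary difficulty is showing that the off-diagonal cross-term does not destroy the positive bound, which requires balanced absorption using the fact that $\|u_1\|_{|s|}^2$ and $\|u_2\|_{|s|}^2$ enter the Green identity on an equal footing.
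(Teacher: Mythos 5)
The paper gives no proof of this lemma; it is quoted from \cite[Lemma 3.1]{BanLS}, where the argument is indeed the energy method you outline, so your overall architecture (Green's identity in $\Omega^-\cup\Omega^+$, jump relations, $|s|$-weighted trace inequalities, and the power count in $\sigma$ that produces $c_\sigma=\min(1,\sigma^2)\,\sigma$) is the right one. However, there is a genuine gap in the middle step: the off-diagonal term cannot be ``controlled by Cauchy--Schwarz and Young's inequality against the positive diagonal terms.'' If you extract the cross term by applying Green's identity to $u=u_1+u_2$ and subtracting the two single-potential identities, what you get is exactly the polarisation $2\Re\, a(u_1,u_2)$ of the same energy form whose diagonal entries are $a(u_1,u_1)$ and $a(u_2,u_2)$; Cauchy--Schwarz then gives $|2\Re\, a(u_1,u_2)|\le a(u_1,u_1)+a(u_2,u_2)$ with constant exactly $1$, so absorption leaves you with $\ge 0$ and no positive multiple of the diagonal survives. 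The resolution in \cite{BanLS} (going back to Bamberger--Ha Duong) is not to split at all: one works with the single combined potential $u=S(s)\varphi - D(s)(s^{-1}\psi)$, for which the specific scaling $sV$, $s^{-1}W$ and the skew placement of $K$, $-K^T$ in \eqref{B-def} make the quadratic form collapse \emph{identically} to $\Re s\,\bigl(\|\nabla u\|^2_{L^2(\R^3\setminus\Gamma)}+|s|^2\|u\|^2_{L^2(\R^3\setminus\Gamma)}\bigr)$, with no leftover cross term. The densities are then recovered from $u$ via the jump relations $[\![\gamma u]\!]=\pm s^{-1}\psi$ and $[\![\partial_\nu u]\!]=\pm\varphi$, and your weighted trace inequalities (applied to $u$, not to $u_1$ and $u_2$ separately) close the argument with the stated $\sigma$-dependence.

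Two smaller points. First, be careful with the conjugation: taken literally, $\Re\bigl[s\,(\|\nabla u_1\|^2+s^2\|u_1\|^2)\bigr]=\Re s\,\|\nabla u_1\|^2+\Re(s^3)\,\|u_1\|^2$, and $\Re(s^3)$ is not sign-definite; the coercive combination is $\Re\bigl[\bar s\,(\|\nabla u_1\|^2+s^2\|u_1\|^2)\bigr]=\Re s\,\|u_1\|^2_{|s|}$, which is what the anti-duality actually produces once the sesquilinearity conventions are fixed. Second, your ``main obstacle'' paragraph identifies the $\sigma$-bookkeeping as the delicate point, but that part is in fact routine once the exact energy identity is in hand (use $|s|\ge\Re s\ge\sigma$ in $\|u\|_{H^1}^2\le\max(1,|s|^{-2})\|u\|_{|s|}^2$ and $\|\partial_\nu u\|_{H^{-1/2}}^2\lesssim(1+|s|^2)\|u\|_{|s|}^2$); the real obstacle is the one described above.
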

Since $B_{\mathrm{imp}}(s)$ differs from $B(s)$ by a skew-hermitian matrix, the same estimate then also holds for $B_{\mathrm{imp}}(s)$. Note that Lemma~\ref{lem:calderon-coerc} yields property {\it 1.} of Theorem~\ref{thm:cont_Herglotz} for the Calder\'on operator $B(s)$ and for the multiplication operator $R(s)=s^{-1}$.

\subsection{Time discretisation by Runge--Kutta convolution quadrature}
Using the notation of Section~\ref{subsec:rkcq}, the boundary integral equation \eqref{bie} is discretised in time with a stepsize $\dt>0$ over a time interval $(0,T)$ with $T=N\dt$ by
\begin{equation}\label{bie-num}
B_{\mathrm{imp}}(\vecb{\partial}_t^\dt) \begin{pmatrix} \vecb{\varphi}^\dt \\ \vecb{\psi}^\dt \end{pmatrix} +
\begin{pmatrix} 0 \\ g(\vecb{\psi}^\dt + {\dot {\vecb{u}}}^{inc}) \end{pmatrix} =
\begin{pmatrix} 0 \\  \partial_\nu^+ \vecb{u}^{inc} \end{pmatrix},
\end{equation}
where $( \vecb{\varphi}^\dt , \vecb{\psi}^\dt)= ( \vecb{\varphi}_n , \vecb{\psi}_n)_{n=0}^{N-1}$ with
$( \vecb{\varphi}_n , \vecb{\psi}_n)=(\varphi_{n,i},\psi_{n,i})_{i=1}^m$
 and $(\varphi_{n,i},\psi_{n,i}) \approx (\varphi(t_n+c_i\dt),\psi(t_n+c_i\dt))$ is the numerical approximation that is to be computed, and $\dot {\vecb{u}}^{inc} = ({\dot {\vecb{u}}}^{inc}_n)_{n=0}^{N-1}$ with ${\dot {\vecb{u}}}^{inc}_n=(\dot u^{inc}(t_n+c_i\dt))_{i=1}^m$. The function $g$ acts componentwise. At the $n$th time step, a non-linear system of equations of the following form needs to be solved:
$$
B_{\mathrm{imp}}\Bigl(\frac{\Delta(0)}\tau\Bigr) \begin{pmatrix} \vecb{\varphi}_n \\ \vecb{\psi}_n\end{pmatrix} +
\begin{pmatrix} 0 \\ g(\vecb{\psi}_n + {\dot {\vecb{u}}}^{inc}_n) \end{pmatrix} = \ldots,
$$
where the dots represent known terms. This has a unique solution, because the eigenvalues of $\Delta(0)=\mathscr{A}^{-1}$ have positive real part, and Lemma~\ref{lem:calderon-coerc} and the monotonicity of $g$ then yield the unique existence of the solution by the Browder--Minty theorem; cf.~\cite{BanR} for the analogous situation for multistep-based convolution quadrature.

As an alternative to \eqref{bie-num}, we further consider the time discretisation of the time-differentiated boundary integral equation:
\begin{equation}\label{bie-num-1}
B_{\mathrm{imp}}(\vecb{\partial}_t^\dt) \begin{pmatrix} \dot{\vecb{\varphi}}^\dt \\ \dot{\vecb{\psi}}^\dt \end{pmatrix} +
\begin{pmatrix} 0 \\ g'(\vecb{\psi}^\dt + {\dot {\vecb{u}}}^{inc})(\dot{\vecb{\psi}}^\dt + {\ddot {\vecb{u}}}^{inc}) \end{pmatrix} =
\begin{pmatrix} 0 \\  \partial_\nu^+ \dot{\vecb{u}}^{inc} \end{pmatrix},
\end{equation}
which is now solved for the approximations $(\dot{\vecb{\varphi}}^\dt, \dot{\vecb{\psi}}^\dt )$ (where the dot is just suggestive notation) to $(\dot\varphi,\dot\psi)$ (where the dot means again time derivative).
Here we define $\vecb{\psi}^\dt = (\vecb{\partial}_t^\dt)^{-1} \dot{\vecb{\psi}}^\dt $ and the same for $\vecb{\varphi}^\dt$. Furthermore, $\ddot {\vecb{u}}^{inc}$ contains the values $\ddot u ^{inc}(t_n+c_i\tau)$.

We can go even further and consider the time discretisation of the twice differentiated boundary integral equation:
\begin{align}\nonumber
&B_{\mathrm{imp}}(\vecb{\partial}_t^\dt) \begin{pmatrix} \ddot{\vecb{\varphi}}^\dt \\ \ddot{\vecb{\psi}}^\dt \end{pmatrix} +
\begin{pmatrix} 0 \\ g'(\vecb{\psi}^\dt + {\dot {\vecb{u}}}^{inc})(\ddot{\vecb{\psi}}^\dt + {\dddot {\vecb{u}}}^{inc}) + g''(\vecb{\psi}^\dt + {\dot {\vecb{u}}}^{inc})\cdot (\dot{\vecb{\psi}}^\dt + {\ddot {\vecb{u}}}^{inc})^2 \end{pmatrix} 
\\
&\hspace{8cm} =
\begin{pmatrix} 0 \\  \partial_\nu^+ \ddot{\vecb{u}}^{inc} \end{pmatrix},
\label{bie-num-2}
\end{align}
where again the dots on the approximation $(\ddot{\vecb{\varphi}}^\dt, \ddot{\vecb{\psi}}^\dt )$ are suggestive notation, and we set $\dot{\vecb{\psi}}^\dt = (\vecb{\partial}_t^\dt)^{-1} \ddot{\vecb{\psi}}^\dt$ and $\vecb{\psi}^\dt = (\vecb{\partial}_t^\dt)^{-2} \ddot{\vecb{\psi}}^\dt $, and the same for $\vecb{\varphi}^\dt$.

Finally, at any point $x\in\Omega^+$ of interest we compute the approximation to the solution value $u(x,t_n+c_i\tau)$ by using the same Runge--Kutta convolution quadrature for discretizing the representation formula \eqref{rep}:
\begin{equation}\label{rep-num}
\vecb{u}^\dt = S(\vecb{\partial}_t^\dt)\vecb{\varphi}^\dt + D(\vecb{\partial}_t^\dt)(\vecb{\partial}_t^\dt)^{-1}\vecb{\psi}^\dt.
\end{equation}
%
%
%

\subsection{Error bounds for the linear case}
We consider first the case of a linear impedance function 
$$
g(\xi)=\alpha\xi \quad\text{ with }\quad \alpha\ge 0. 
$$
Let ${\vecb{u}}^\tau=(\vecb{u}_n)_{n=0}^{N-1}$ with $\vecb{u}_n=(u_{n,i})_{i=1}^m$ be the solution approximation obtained by the discretised representation formula \eqref{rep-num} with either of the discretised boundary integral equations \eqref{bie-num} or  \eqref{bie-num-1} or  \eqref{bie-num-2}. The discretisation is done by Runge--Kutta convolution quadrature based on the Radau IIA method with $m$ stages.

Here we obtain the following optimal-order pointwise error bounds for $x$ bounded away from $\Gamma$.
\begin{proposition} Suppose that in a neighbourhood of the boundary $\Gamma$, the incident wave $u_{inc}$  together with its extension by $0$ to $t<0$ is sufficiently regular. For $x\in\Omega^+$ with $\dist(x,\Gamma)\ge \delta>0$, the following optimal-order  error bound is satisfied in the linear situation described above: for $0\le t_n=n\tau\le T$,
$$
|u_n(x) - u(x,t_n) | \le C(\delta,T)\, \tau^{2m-1}.
$$
\end{proposition}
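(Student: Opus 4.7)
The plan is to exploit linearity together with the composition rule of Runge--Kutta convolution quadrature (Section~\ref{subsec:rkcq}) to collapse the composite scheme \eqref{bie-num}--\eqref{rep-num} into a single Runge--Kutta CQ applied to a scalar-valued data-to-point-value convolution operator $K(s)$, and then invoke Theorem~\ref{thm:err-rkcq}.

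In the linear case $g(\xi)=\alpha\xi$, equation \eqref{bie-num} becomes the linear convolution system
\[
\bigl(B_{\mathrm{imp}}(\vecb{\partial}_t^\dt)+\alpha\operatorname{diag}(0,1)\bigr)\begin{pmatrix}\vecb{\varphi}^\dt\\ \vecb{\psi}^\dt\end{pmatrix} = \begin{pmatrix}0\\ \partial_\nu^+\vecb{u}^{inc}-\alpha\dot{\vecb{u}}^{inc}\end{pmatrix}.
\]
Combining this with \eqref{rep-num} via the composition rule yields $\vecb{u}^\dt(x) = K(\vecb{\partial}_t^\dt)\vecb{d}$ with
\[
K(s) = \bigl(S(s),\, s^{-1}D(s)\bigr)\,\bigl(B_{\mathrm{imp}}(s)+\alpha\operatorname{diag}(0,1)\bigr)^{-1}
\]
evaluated pointwise at $x$, and data $\vecb{d}$ built from $u^{inc}|_\Gamma$ and its traces. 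The exact solution satisfies the analogous continuous identity $u(x,\cdot)=K(\partial_t)d$, so the error reduces to that of a single Runge--Kutta CQ. The variants \eqref{bie-num-1} and \eqref{bie-num-2} fit the same framework in the linear case, the only effect of multiplying through by powers of $\vecb{\partial}_t^\dt$ being to shift regularity from the symbol to the data.

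For the Laplace-domain bound on $K(s)$, Lemma~\ref{lem:calderon-coerc} persists under addition of $\alpha\operatorname{diag}(0,1)$ (which contributes non-negative Hermitian part for $\alpha\ge 0$), giving $\|(B_{\mathrm{imp}}(s)+\alpha\operatorname{diag}(0,1))^{-1}\|\le C|s|^2/c_\sigma$ on $H^{1/2}(\Gamma)\times H^{-1/2}(\Gamma)\to H^{-1/2}(\Gamma)\times H^{1/2}(\Gamma)$. For $\dist(x,\Gamma)\ge\delta$, the fundamental solution $G_s(x,y)=e^{-s|x-y|}/(4\pi|x-y|)$ and its normal derivative $\partial_{\nu_y}G_s(x,y)$ are smooth in $y\in\Gamma$ with $\|G_s(x,\cdot)\|_{H^k(\Gamma)}+\|\partial_{\nu_y}G_s(x,\cdot)\|_{H^k(\Gamma)}\le C_k(\delta)(1+|s|)^k$ for every $k\ge 0$. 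Pairing these smooth kernels against $(\varphi,\psi)$ in high-regularity Sobolev spaces (available through elliptic regularity for the exterior Helmholtz problem with impedance boundary condition, transferred from the smoothness of $u^{inc}$) yields a bound $\|K(s)\|\le C(\delta,\sigma)(1+|s|)^\mu/(\Re s)^\nu$ with $\nu-\mu\ge m-2$. Theorem~\ref{thm:err-rkcq} then furnishes the rate $\min(2m-1,\,m+1-\mu+\nu)=2m-1$, and the $C^r$-regularity in time needed by the theorem is inherited from that of $u^{inc}$ extended by zero for $t<0$.

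The main obstacle is producing the Laplace-domain bound with $\nu-\mu\ge m-2$ uniformly for arbitrary $m$. A naive composition of $\|(B_{\mathrm{imp}}(s)+\alpha\operatorname{diag}(0,1))^{-1}\|\lesssim|s|^2/c_\sigma$ with the crudest bound $\|S(s)\|_{\mathbb{C}\leftarrow H^{-1/2}}+\|s^{-1}D(s)\|_{\mathbb{C}\leftarrow H^{1/2}}\lesssim(1+|s|)^{1/2}$ gives $\nu-\mu$ of order one, ample for the two-stage Radau IIA method but falling short for higher-stage methods. The remedy is to exploit the full smoothness of $G_s(x,\cdot)$ on $\Gamma$ by using higher Sobolev pairings and trading regularity of $u^{inc}$ for additional negative powers of $|s|$ in the pointwise evaluation. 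Quantifying this Sobolev-scale trade-off uniformly on the half-plane $\Re s\ge\sigma$, and matching it to the order exponents demanded by Theorem~\ref{thm:err-rkcq}, is the technical heart of the argument and dictates the $\delta$- and $\sigma$-dependence of the constant.
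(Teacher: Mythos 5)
Your overall skeleton matches the paper's: use linearity and the composition rule to write $\vecb{u}^\dt(x)=M_x(\vecb{\partial}_t^\dt)\vecb{f}$ with $M_x(s)=(S_x(s),\,D_x(s)s^{-1})B_\alpha(s)^{-1}$ and then invoke Theorem~\ref{thm:err-rkcq}. But the step you yourself flag as ``the technical heart of the argument'' --- producing a Laplace-domain bound with $\nu-\mu$ large enough that $\min(2m-1,\,m+1-\mu+\nu)=2m-1$ for \emph{every} $m$ --- is exactly the step you do not supply, and the mechanism you propose for it would not work. You suggest trading Sobolev regularity of the kernel $G_s(x,\cdot)$ and of $u^{inc}$ for negative powers of $|s|$; but as your own estimate $\|G_s(x,\cdot)\|_{H^k(\Gamma)}\le C_k(\delta)(1+|s|)^k$ shows, higher Sobolev norms of the kernel \emph{grow} in $|s|$, and extra temporal regularity of the data is already what Theorem~\ref{thm:err-rkcq} consumes through its $C^r$ hypothesis --- it cannot additionally lower the exponent $\mu-\nu$ in the operator bound \eqref{K-bounds}, which is what controls the rate.

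The ingredient you are missing is the \emph{exponential decay in $\Re s$} of the point-evaluation potentials away from $\Gamma$: since $|e^{-s|x-y|}|=e^{-(\Re s)|x-y|}\le e^{-\delta\,\Re s}$ for $\dist(x,\Gamma)\ge\delta$, one has (this is \cite[Lemma 6]{BanLM}, quoted as \eqref{Sx}--\eqref{Dx} in the paper)
\[
\|S_x(s)\|_{\C\gets H^{-1/2}(\Gamma)}+\|D_x(s)\|_{\C\gets H^{1/2}(\Gamma)}\le C(\sigma,\delta)\,|s|\,e^{-\delta\,\Re s},
\]
whence $\|M_x(s)\|\le C(\sigma,\delta)\,|s|^3(\Re s)^{-1}e^{-\delta\,\Re s}$. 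Because $e^{-\delta\,\Re s}\le C_\nu(\delta)(\Re s)^{-\nu}$ for \emph{arbitrary} $\nu\ge0$, this single factor makes $\nu$ in \eqref{K-bounds} as large as you please, so the minimum in Theorem~\ref{thm:err-rkcq} is always attained at $2m-1$, uniformly in the number of stages. This is where the hypothesis $\dist(x,\Gamma)\ge\delta$ enters quantitatively (and why the constant depends on $\delta$); without it the full order is genuinely lost, as the paper's second proposition on the $H^1(\Omega^+)$ error shows. Your write-up records the distance condition but never converts it into decay in $s$, so as it stands the argument only delivers the reduced order $m+1-\mu+\nu$ with $\nu-\mu=O(1)$, which you correctly note is insufficient beyond $m=2$.
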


\begin{proof}
We denote 
$$
B_\alpha(s) = B_{\mathrm{imp}}(s)+
\begin{pmatrix} 0&0 \\ 0&\alpha I\end{pmatrix}.
$$
By Lemma~\ref{lem:calderon-coerc}, $B_\alpha(s)$ is invertible for $\alpha\ge 0$ with the bound, for $\Re s\ge \sigma>0$,
\begin{equation}
  \label{eq:Balpha_bound}
\| B_\alpha(s)^{-1} \| \le C(\sigma) \frac{|s|^2}{\Re s}.  
\end{equation}
The exact solution $u(x,t)$ is given by the representation formula \eqref{rep} with
$$
 \begin{pmatrix}
      \varphi \\ \psi
  \end{pmatrix}
= B_\alpha^{-1}(\partial_t) 
\begin{pmatrix} 0 \\  \partial_\nu^+ u^{inc} -\alpha \dot u^{inc}\end{pmatrix}.
$$
For $x\in\Omega^+$ we define the operators $S_x(s):H^{-1/2}(\Gamma)\to \C$ and 
$D_x(s):H^{1/2}(\Gamma)\to \C$ by
$$
S_x(s)\varphi = (S(s)\varphi)(x) \quad\text{and}\quad D_x(s)\psi = (D(s)\psi)(x).
$$
These operators are bounded for $\Re s\ge\sigma >0$ and dist$(x,\Gamma)\ge \delta>0$ by
\begin{align} \label{Sx}
&\| S_x(s) \| _{\C\gets H^{-1/2}(\Gamma)} \le C(\sigma,\delta)\, |s|\, e^{-\delta\,\Re s}
\\
\label{Dx}
&\| D_x(s) \| _{\C\gets H^{1/2}(\Gamma)} \ \le C(\sigma,\delta)\,  |s|\, e^{-\delta\,\Re s}.
\end{align}
The first bound is proved in \cite[Lemma 6]{BanLM} and the second bound is proved similarly.

We thus have
$$
u(x,t) = (M_x(\partial_t)f)(t) 
$$
with
$$
M_x(s)=(S_x(s),D_x(s)s^{-1})B_\alpha(s)^{-1} 
\quad\text{ and }\quad
f= \begin{pmatrix} 0 \\  \partial_\nu^+ u^{inc} -\alpha \dot u^{inc}\end{pmatrix}.
$$
With the above operator bounds we obtain for $\Re s \ge \sigma >0$ and dist$(x,\Gamma)\ge \delta>0$
\begin{equation}\label{Kx-bound}
\| M_x(s) \|_{\C \gets H^{1/2}(\Gamma) \times H^{-1/2}(\Gamma)} \le C(\sigma,\delta) \, \frac{|s|^3}{\Re s} \,
e^{-\delta\,\Re s}\;.
\end{equation}
By the composition rule, the numerical solution obtained by \eqref{bie-num} and \eqref{rep-num} is given as
$$
{\vecb{u}}^\dt(x) = M_x(\vecb{\partial}_t^\dt)\vecb{f},
$$
where $\vecb{f}$ contains the values of $f$ at the points $t_n+c_i\dt$.
If we take instead \eqref{bie-num-1} or \eqref{bie-num-2}, then we have
$$
{\vecb{u}}^\dt(x) = M_x(\vecb{\partial}_t^\dt)\,(\vecb{\partial}_t^\dt)^{-1}\dot{\vecb{f}}\quad\text{ or }\quad
{\vecb{u}}^\dt(x) = M_x(\vecb{\partial}_t^\dt)\,(\vecb{\partial}_t^\dt)^{-2}\ddot{\vecb{f}},
$$
respectively. In view of \eqref{Kx-bound}, Theorem~\ref{thm:err-rkcq} then yields the result.
\qed
\end{proof}

The situation is different if we consider the $H^1(\Omega^+)$ norm of the error.

\begin{proposition} Suppose that in a neighbourhood of the boundary $\Gamma$, the incident wave $u_{inc}$  together with its extension by $0$ to $t<0$ is sufficiently regular. Then, the following   error bounds are satisfied in the linear situation described above: for $0\le t_n=n\tau\le T$,
$$
\|u_n - u(\cdot,t_n) \|_{H^1(\Omega^+)} \le C(T)\, \tau^{k}
$$
with
\begin{align*}
&k= m+1/2 &\text{if \eqref{bie-num} is used},\\
&k=\min(2m-1,m+3/2) &\text{if \eqref{bie-num-1} is used},\\
&k=\min(2m-1,m+5/2) &\text{if \eqref{bie-num-2} is used}.\\
\end{align*}
\end{proposition}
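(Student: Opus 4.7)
The plan is to recast both the exact and the three discretised solutions as convolutions with one and the same Laplace-domain operator $N(s)$, and then to invoke the composition rule together with Theorem~\ref{thm:err-rkcq}. Set $N(s) = \bigl(S(s),\,D(s)s^{-1}\bigr) B_\alpha(s)^{-1}$, now viewed as an analytic family of bounded operators from $H^{1/2}(\Gamma) \times H^{-1/2}(\Gamma)$ into $H^1(\Omega^+)$ rather than into $\mathbb{C}$ as in the pointwise proposition. Then $u = N(\partial_t) f$ with $f = (0,\ \partial_\nu^+ u^{inc} - \alpha\dot u^{inc})^T$, and by the composition rule for Runge--Kutta convolution quadrature together with linearity of $B_\alpha$, the three schemes \eqref{bie-num}, \eqref{bie-num-1}, \eqref{bie-num-2} produce respectively
\[
\vecb{u}^\dt = N(\vecb{\partial}_t^\dt)\,\vecb{f}, \quad N(\vecb{\partial}_t^\dt)(\vecb{\partial}_t^\dt)^{-1}\dot{\vecb{f}}, \quad N(\vecb{\partial}_t^\dt)(\vecb{\partial}_t^\dt)^{-2}\ddot{\vecb{f}},
\]
each of which is a convolution quadrature approximation to the same continuous solution $u = N(\partial_t) f = N(\partial_t)\partial_t^{-1}\dot f = N(\partial_t)\partial_t^{-2}\ddot f$.

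The key analytic input is a Laplace-domain bound of the form $\|N(s)\|_{H^1(\Omega^+) \leftarrow H^{1/2}(\Gamma) \times H^{-1/2}(\Gamma)} \le C(\sigma)\,|s|^\mu/(\Re s)^\nu$ with $\mu - \nu = 1/2$. I would obtain this by combining \eqref{eq:Balpha_bound} with standard polynomial growth estimates of the single- and double-layer potentials $S(s)$ and $D(s)$ regarded as operators into $H^1(\R^3\setminus\Gamma)$, for which sharp bounds are collected in \cite{Say16,LalS,BanLM}. Crucially, in contrast to the bounds \eqref{Sx}--\eqref{Dx} used for pointwise values away from $\Gamma$, no exponential damping factor $e^{-\delta\Re s}$ is available here, because the representation formula is evaluated up to the boundary; this is precisely why the classical order $2m-1$ cannot be reached for the undifferentiated scheme.

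It then remains to apply Theorem~\ref{thm:err-rkcq} to each of the operators $N(s)$, $N(s)s^{-1}$, $N(s)s^{-2}$, whose exponents satisfy $\mu - \nu = 1/2,\,-1/2,\,-3/2$ respectively. The theorem delivers the orders $\min(2m-1,\,m+1/2)$, $\min(2m-1,\,m+3/2)$, and $\min(2m-1,\,m+5/2)$. For the Radau IIA methods of interest ($m\ge 2$), the first minimum reduces to $m+1/2$, yielding exactly the stated rates. The vanishing-initial-derivative hypothesis of Theorem~\ref{thm:err-rkcq} is satisfied by virtue of the assumed regularity of $u^{inc}$ together with its vanishing extension to $t<0$.

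The main obstacle is establishing the sharp exponent $\mu - \nu = 1/2$ for the $H^1(\Omega^+)$-valued solution operator $N(s)$: a naive composition of the bounds for $S(s)$, $D(s)$ and $B_\alpha(s)^{-1}$ yields a strictly larger value of $\mu-\nu$ and hence a suboptimal rate, so one has to exploit both the smoothing effect of the factor $s^{-1}$ in front of $D(s)$ in the representation formula and the sharp $\Re s$-weighted form of the Calder\'on coercivity of Lemma~\ref{lem:calderon-coerc}.
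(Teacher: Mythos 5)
Your overall architecture is exactly that of the paper: write the exact and discrete solutions as (convolution quadrature applied to) a single solution operator $N(s)$ mapping the data $f$ into $H^1(\Omega^+)$, note that the three schemes correspond to $N(s)$, $N(s)s^{-1}$, $N(s)s^{-2}$, and feed a bound of the form \eqref{K-bounds} with $\mu-\nu=1/2$ into Theorem~\ref{thm:err-rkcq}. The bookkeeping of the resulting orders is also correct. However, the one step that carries the actual mathematical content --- proving that $N(s)$ satisfies \eqref{K-bounds} with $\mu-\nu=1/2$ --- is not established. You yourself observe that composing \eqref{eq:Balpha_bound} with the standard $H^1(\R^3\setminus\Gamma)$ bounds for $S(s)$ and $D(s)$ gives a strictly larger $\mu-\nu$ (indeed it gives $\mu-\nu=1$ and hence only order $m$), and you then only gesture at ``exploiting the smoothing effect of $s^{-1}$ in front of $D(s)$ and the sharp $\Re s$-weighted coercivity'' without saying how. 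That is precisely the gap: no amount of massaging the layer-potential bounds in the representation formula is known to recover the missing half power, so as written the proposal proves $k=m$, not $k=m+1/2$, for the first scheme (and correspondingly weaker rates for the other two).

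The paper closes this gap by abandoning the representation formula altogether for the purpose of the bound: it works directly with the Laplace-transformed boundary value problem \eqref{helm-eq}. Testing with $s\hat u$, integrating by parts and taking real parts gives $\Re s\,\|\hat u\|_{|s|,\Omega^+}^2 = -\Re\langle \partial_\nu^+\hat u, s\gamma^+\hat u\rangle_\Gamma$, and the trace inequality \eqref{eq:bd_est} of Bamberger and Ha~Duong, $\|\partial_\nu^+\hat u\|_{H^{-1/2}(\Gamma)}\le C(\sigma)|s|^{1/2}\|\hat u\|_{|s|,\Omega^+}$, then yields $\|\hat u\|_{|s|,\Omega^+}\le C(\sigma)\,|s|^{1/2}(\Re s)^{-1}\|\hat\psi\|_{H^{1/2}(\Gamma)}$; combined with \eqref{eq:Balpha_bound} to control $\hat\psi$ by $\hat f$, this gives $\|\hat u\|_{H^1(\Omega^+)}\le C(\sigma)\,|s|^{5/2}(\Re s)^{-2}\|\hat f\|_{H^{-1/2}(\Gamma)}$, i.e.\ $\mu=5/2$, $\nu=2$, $\mu-\nu=1/2$. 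The factor $|s|^{1/2}$ in \eqref{eq:bd_est} is where the half power is gained, and it is not visible from the mapping properties of $S(s)$ and $D(s)$ alone. To complete your proof you would need to import this energy-plus-trace argument (or an equivalent sharp bound on the Dirichlet-to-$H^1$ solution map); without it the claimed exponent is an assertion, not a consequence of the ingredients you cite.
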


\begin{proof} 
Consider the Laplace transformed wave equation \eqref{wave-eq}
\begin{equation}
  \label{helm-eq}
\begin{aligned}
  -\Delta \hat u + s^2 \hat u &= 0 & &\text{in }\Omega^+,\\
\partial_\nu^+\hat u -\alpha s \hat u &= \hat f & &\text{on }\Gamma^,
\end{aligned}  
\end{equation}
where  $\hat u$ is the Laplace transform of $u$ and $\hat f$ the Laplace transform of 
\[
f = \partial_\nu^+ u^{inc}-\alpha \dot u^{inc}.
\]
 We will require the estimate, see \cite[Equation (2.9)]{BamH}, 
\begin{equation}
  \label{eq:bd_est}
  \|\partial_\nu^+ \hat u\|_{H^{-1/2}(\Gamma)} \leq C(\sigma) |s|^{1/2} \|\hat u\|_{|s|,\Omega^+},
\end{equation}
with $\Re s \geq \sigma > 0$ and the scaled $H^1$ norm
\[
\|\hat u\|^2_{|s|,\Omega^+} = \|\nabla \hat u\|^2_{L^2(\Omega^+)}+|s|^2 \|\hat u\|^2_{L^2(\Omega^+)}.
\]

Testing \eqref{helm-eq} with $s\hat u$, integrating by parts and taking the real part gives
\[
\Re s \|\hat u\|^2_{|s|,\Omega^+}= -\Re \langle \partial_\nu^+ \hat u, s \gamma^+ \hat u\rangle_\Gamma
\leq  C(\sigma) |s|^{1/2} \|\hat u\|_{|s|,\Omega^+}\|\hat\psi\|_{H^{1/2}(\Gamma)},
\]
where $\hat \psi = s \gamma^+ \hat u$ is the Laplace transform of $\psi$. Making use of $\|\hat u\|_{H^1(\Omega^+)} \leq C(\sigma) \|\hat u\|_{|s|,\Omega^+}$ and the bound \eqref{eq:Balpha_bound} gives
\[
\|\hat u\|_{H^1(\Omega^+)} \leq C(\sigma) \frac{|s|^{5/2}}{(\Re s)^2}\|\hat f\|_{H^{-1/2}(\Gamma)}.
\] The stated result then follows from Theorem~\ref{thm:err-rkcq}.
\qed
\end{proof}

\subsection{Convergence for the non-linear problem}
There are several aspects which make the error analysis of the non-linear problem more intricate:
\begin{itemize}
\item The numerical solution can no longer be interpreted as a mere convolution quadrature for an appropriate operator $K(s)$ acting on the data (i.e., the incident wave).
\item We need to impose regularity assumptions on the solution rather than the data.
\item Convolution coercivity now plays an important role in ensuring the stability of the time discretisation.
\end{itemize}
We assume strict monotonicity of the non-linear function $g:\R\to \R$: there exists $\beta>0$ such that
\begin{equation}\label{g-monotone}
(\xi-\eta)\bigl( g(\xi)-g(\eta) \bigr) \ge \beta\, |\xi-\eta|^2 \quad\text{ for all }\ \xi,\eta\in\R.
\end{equation}
Furthermore, we assume that the pointwise application of $g$ maps $H^{1/2}(\Gamma)$  to $H^{-1/2}(\Gamma)$. As is shown in \cite{BanR} by Sobolev embeddings, this is satisfied if $g(\xi)$ grows at most cubically as $|\xi|\to\infty$.

In the following we write for a stepsize $\tau>0$ and a sequence $\vecb{e}=(\vecb{e_n})_{n=0}^{N-1}$ with $\vecb{e_n}=(e_{n,i})_{i=1}^m$ and $e_{n,i}$ in a Hilbert space $V$
$$
\| \vecb{e} \|_{\ell_2^\dt(0:N;V^m)} = \tau \sum_{n=0}^{N-1} \sum_{i=1}^m \| e_{n,i} \|_V^2.
$$
We denote the numerical solution by ${\vecb{u}}^\tau=(u_{n,i})$ and the corresponding values of the exact solution by $\vecb{u}=(u(t_n+c_i\tau))$, where in both cases $n=0,\dots,N-1$ and $i=1,\dots,m$.

We have the following error bound for the non-linear problem. Here the restriction to the two-stage Radau IIA method stems from Lemma~\ref{lem:rk-coerc}.

\begin{proposition} \label{prop:err-nonlin}
Let the non-linear function $g$ be continuous, strictly monotone and have at most cubic growth. Suppose that the solution $u$ to the problem \eqref{wave-eq}--\eqref{bc} is sufficiently regular. 

Consider the time discretisation \eqref{bie-num} and \eqref{rep-num} by the two-stage Radau IIA convolution quadrature method.
Then, there is $\bar\tau>0$ such that for stepsizes $0<\tau\le\bar\tau$, the error in the boundary values satisfies the bound
\begin{equation}\label{err-boundary}
\| \gamma^+{\vecb{u}}^\tau - \gamma^+\vecb{u} \|_{\ell_2^\dt(0:N;H^{1/2}(\Gamma)^2)} \le C\tau^3,
\end{equation}
and the error in the exterior domain is bounded by
\begin{equation}\label{err-ext}
\| {\vecb{u}}^\tau - \vecb{u} \|_{\ell_2^\dt(0:N;H^{1}(\Omega^+)^2)} \le C\tau^{3/2}.
\end{equation}
The constants $C$ are independent of $\tau$ and $N$ with $0<N\tau\le T$, but depend on $T$.
\end{proposition}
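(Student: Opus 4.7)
The strategy is to derive an error equation, test it against the error pair in the anti-duality pairing, bound the left-hand side from below using Theorem~\ref{thm:rk-pos} together with the monotonicity of $g$, and close the estimate using the consistency bound of Theorem~\ref{thm:err-rkcq-int}.

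First, set $\vecb{e}^\tau_\varphi = \vecb{\varphi}^\tau - \vecb{\varphi}$ and $\vecb{e}^\tau_\psi = \vecb{\psi}^\tau - \vecb{\psi}$, where $(\vecb{\varphi},\vecb{\psi})$ collects the stage values of the exact $(\varphi,\psi)$. Substitution of these exact values into \eqref{bie-num} perturbs it by a consistency defect $\vecb{\delta}^\tau$ arising only from the action of $B_{\mathrm{imp}}(\vecb{\partial}_t^\dt)$ on $(\vecb{\varphi},\vecb{\psi})$, since the pointwise nonlinearity $g$ and the data are reproduced exactly at the stage points. Subtraction from \eqref{bie-num} yields the error equation
\[
B_{\mathrm{imp}}(\vecb{\partial}_t^\dt)\begin{pmatrix}\vecb{e}^\tau_\varphi \\ \vecb{e}^\tau_\psi\end{pmatrix} + \begin{pmatrix}0 \\ g(\vecb{\psi}^\tau + \dot{\vecb{u}}^{inc}) - g(\vecb{\psi} + \dot{\vecb{u}}^{inc})\end{pmatrix} = -\vecb{\delta}^\tau.
\]

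I would then test with $(\vecb{e}^\tau_\varphi,\vecb{e}^\tau_\psi)$ in the anti-duality pairing on $\Gamma$, sum over $n=0,\dots,N-1$ with weights $e^{-2\tilde\sigma t_n}$ for $\tilde\sigma=\sigma/c$ as in Theorem~\ref{thm:rk-pos} and a fixed $\sigma>0$ that makes $c_\sigma>0$ in Lemma~\ref{lem:calderon-coerc}, and take real parts. Since $B_{\mathrm{imp}}$ differs from $B$ by a skew-Hermitian matrix, Lemma~\ref{lem:calderon-coerc} together with Theorem~\ref{thm:rk-pos} delivers a lower bound of the form $c_1\,\tau\sum_n e^{-2\tilde\sigma t_n}\|(\vecb{\partial}_t^\dt)^{-1}(\vecb{e}^\tau_\varphi,\vecb{e}^\tau_\psi)_n\|^2_{H^{-1/2}\times H^{1/2}}$, and the strict monotonicity \eqref{g-monotone} applied pointwise on $\Gamma$ contributes an additional $c_2\,\tau\sum_n e^{-2\tilde\sigma t_n}\|\vecb{e}^\tau_{\psi,n}\|^2_{L^2(\Gamma)}$. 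To bound the remaining term $-\tau\sum_n e^{-2\tilde\sigma t_n}\Re\langle \vecb{e}^\tau_n,\vecb{\delta}^\tau_n\rangle$ I would pass to discrete Laplace transforms on $|\zeta|=\rho=e^{-\tilde\sigma\tau}$, and apply Parseval together with a matrix-valued Cauchy--Schwarz that moves one factor of $\vecb{\partial}_t^\dt$ across the pairing: the error side then appears as $(\vecb{\partial}_t^\dt)^{-1}\vecb{e}^\tau$, absorbed into the coercivity bound by Young's inequality, while the defect side appears, up to an adjoint symbol, as $\vecb{\partial}_t^\dt\vecb{\delta}^\tau$, which by the composition rule is the Radau~IIA convolution quadrature defect for the operator $sB_{\mathrm{imp}}(s)$ applied to $(\varphi,\psi)$. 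By Theorem~\ref{thm:err-rkcq-int} with $m=2$, using the polynomial growth of $sB_{\mathrm{imp}}(s)$ and the $c_\sigma$-factor from Lemma~\ref{lem:calderon-coerc} supplying the negative power of $\Re s$, this defect is of order $\tau^{m+1}=\tau^3$ in the dual $\ell_2^\dt$-norm, under the regularity hypotheses on $(\varphi,\psi)$ inherited from the assumed regularity of $u$. Removing the exponential weight costs only a factor $e^{2\tilde\sigma T}$ in $C$, yielding
\[
\|(\vecb{\partial}_t^\dt)^{-1}\vecb{e}^\tau\|_{\ell_2^\dt(H^{-1/2}\times H^{1/2})} + \|\vecb{e}^\tau_\psi\|_{\ell_2^\dt(L^2)} \le C\tau^3.
\]
Since the boundary trace of the exact solution satisfies $\gamma^+u=\partial_t^{-1}\psi$, the $\psi$-component of the first summand is exactly the boundary-trace error in \eqref{err-boundary}.

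For \eqref{err-ext}, I would feed the already-established bound on $(\vecb{e}^\tau_\varphi,\vecb{e}^\tau_\psi)$ into the discrete representation formula \eqref{rep-num} and proceed as in the linear $H^1$ proposition: the Helmholtz energy estimate \eqref{eq:bd_est} produces the $|s|^{5/2}/(\Re s)^2$ bound on the solution map, and the resulting loss relative to the boundary estimate brings the rate down to $\tau^{3/2}$. The main obstacle is the matrix Cauchy--Schwarz step in the third paragraph: because the Runge--Kutta differentiation symbol $\Delta(\zeta)/\tau$ is not self-adjoint in the $b$-weighted inner product \eqref{b-inner-product}, the rearrangement in Fourier space places an adjoint convolution symbol on the defect side, and one must verify that the resulting ``adjoint'' convolution quadrature defect still obeys the $\tau^{m+1}$ order predicted by Theorem~\ref{thm:err-rkcq-int}. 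This reduces to careful bookkeeping of the growth exponents $(\mu,\nu)$ of $sB_{\mathrm{imp}}(s)$ against the smoothness of $(\varphi,\psi)$, which is where the ``sufficiently regular'' hypothesis on $u$ has to be quantified.
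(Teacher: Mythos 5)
Your overall energy strategy (error equation, discrete coercivity from Theorem~\ref{thm:rk-pos}, monotonicity of $g$, consistency from Theorem~\ref{thm:err-rkcq-int}) is the right one, but there is a genuine gap in the treatment of the defect, and it sits exactly at the step you yourself flag as ``the main obstacle''. Because you work with the full Calder\'on system, the defect $\vecb{\delta}^\tau$ has a component in the first ($\varphi$-)equation that is paired with $\vecb{e}^\tau_\varphi$, and the only control you have on $\vecb{e}^\tau_\varphi$ is what Lemma~\ref{lem:calderon-coerc} combined with Theorem~\ref{thm:rk-pos} gives, namely an $H^{-1/2}(\Gamma)$ bound on $(\vecb{\partial}_t^\dt)^{-1}\vecb{e}^\tau_\varphi$; the monotonicity of $g$ only yields $L^2$ control of the $\psi$-component. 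Your fix --- moving one factor of $\vecb{\partial}_t^\dt$ across the pairing --- does not deliver the claimed rate: the defect must then be measured as the convolution quadrature error of $sB_{\mathrm{imp}}(s)$, whose growth exponent $\mu$ is one larger than that of $B_{\mathrm{imp}}(s)$, and the order in Theorem~\ref{thm:err-rkcq-int} is capped at $\min(m+1,\,m+1-\mu+\nu)$ \emph{independently of the regularity of the data}, so you lose a full power of $\tau$ and end with $O(\tau^2)$ instead of $O(\tau^3)$ in \eqref{err-boundary}. (Equivalently, $\|\vecb{\partial}_t^\dt\vecb{\delta}^\tau\|\sim\tau^{-1}\|\vecb{\delta}^\tau\|$ in the worst case.) The remark that the $c_\sigma$-factor of Lemma~\ref{lem:calderon-coerc} supplies the needed negative power of $\Re s$ is also off: $c_\sigma$ appears in the coercivity lower bound, not in the operator-norm bound that feeds into Theorem~\ref{thm:err-rkcq-int}.

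The paper avoids this difficulty entirely by eliminating $\varphi$ first. Using the composition rule, both the continuous system \eqref{bie} and the discrete system \eqref{bie-num} reduce exactly to a single equation for $\psi$ with the operator $L(s)=-s^{-1}\mathrm{DtN}^+(s)$, which satisfies \eqref{L-bound} and \eqref{L-coerc}. The only error variable is then $\vecb{\eps}=\vecb{\psi}^\dt-\vecb{\psi}$; the strict monotonicity of $g$ contributes $\beta\,\|\vecb{\eps}_n\|_{L^2(\Gamma)}^2$ on the left-hand side, and the single defect $\vecb{d}$ --- the convolution quadrature error of $L(\partial_t)\psi$, of size $O(\tau^3)$ even in $H^{1/2}(\Gamma)$ by Theorem~\ref{thm:err-rkcq-int} with $\mu=\nu=1$ --- is absorbed by a plain Cauchy--Schwarz/Young step against that $L^2$ monotonicity term, with no adjoint symbols and no derivative shifted onto the defect. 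To rescue your system-level argument you would either have to perform this Schur-complement reduction or produce $\ell_2^\dt$-in-time control of $\vecb{e}^\tau_\varphi$ itself from some other source; as written, the argument proves \eqref{err-boundary} only with rate $\tau^2$. The subsequent passage to \eqref{err-ext} via the representation formula is sound in spirit, though the paper uses the map $M(s)=S(s)V(s)^{-1}$ with the $|s|^{3/2}/\Re s$ bound and a stability-plus-consistency splitting rather than the $|s|^{5/2}/(\Re s)^2$ estimate of the linear proposition.
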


\begin{proof} 
We eliminate $\varphi$ in the system of boundary integral equations \eqref{bie} to arrive at a boundary integral equation for $\psi$,
\begin{equation} \label{psi-eq}
L(\partial_t)\psi + g(\psi + {\dot u}^{inc})  =
 \partial_\nu^+ u^{inc},
\end{equation}
where
$$
L(s) = s^{-1} \Bigl(W(s) - \bigl(\tfrac12 I - K^T(s)\bigr) V(s)^{-1} \bigl(\tfrac12 I - K(s)\bigr)\Bigr) =
-s^{-1}\mathrm{DtN}^+(s)
$$
with the exterior Dirichlet-to-Neumann operator $\mathrm{DtN}^+(s)$. It follows from Propositions 17 and 18 (and their proofs) in \cite{LalS}  that, for $\Re s \ge \sigma >0$, there exist $C(\sigma)$ and $\alpha(\sigma)>0$ such that
\begin{align}\label{L-bound}
&\| L(s) \|_{H^{-1/2}(\Gamma)\gets H^{1/2}(\Gamma)} \le C(\sigma)\,\frac{|s|}{\Re s},
\\
& 
\label{L-coerc}
\Re\langle \psi, L(s) \psi \rangle \ge \alpha(\sigma) \, \frac{\Re s}{|s|^2} \,\|\psi\|_{H^{1/2}(\Gamma)}^2
\quad\text{ for all }\ \psi\in H^{1/2}(\Gamma),
\end{align}
where $\langle\cdot,\cdot\rangle$ denotes the anti-duality pairing between $H^{-1/2}(\Gamma)$ and $H^{1/2}(\Gamma)$.

Thanks to the composition rule, we can do the same for the numerical discretisation \eqref{bie-num} and reduce the numerical system to an equation for $\vecb{\psi}^\dt$, which is just the convolution quadrature time discretisation of \eqref{psi-eq},
\begin{equation} \label{psi-eq-num}
L(\vecb{\partial_t}^\dt)\vecb{\psi}^\dt + g(\vecb{\psi}^\dt + {\dot {\vecb{u}}}^{inc})  =
 \partial_\nu^+ \vecb{u}^{inc}.
\end{equation}
For the error $\vecb{\eps}=\vecb{\psi}^\dt - \vecb{\psi}$ with $ \vecb{\psi}= \bigl((\psi(t_n+c_i\dt))_{i=1}^m\bigr)_{n=0}^{N-1}$ we then have the error equation
\begin{equation} \label{psi-eq-err}
L(\vecb{\partial_t}^\dt)\vecb{\eps} + g(\vecb{\psi}^\dt + {\dot {\vecb{u}}}^{inc})-g(\vecb{\psi} + {\dot {\vecb{u}}}^{inc})  =
\vecb{d}
\end{equation}
with the defect
$$
\vecb{d} = L(\vecb{\partial_t}^\dt)\vecb{\psi} - \bigl(\bigl(L(\partial_t)\psi(t_{n}+c_i\dt)\bigr)_{i=1}^m\bigr)_{n=0}^{N-1},
$$
which is the convolution quadrature error for $L(\partial_t)\psi$.
By Theorem~\ref{thm:err-rkcq-int} and our assumption of a sufficiently regular $\psi=\gamma^+\dot u$, this is bounded by
$$
\| \vecb{d}_n \|_{H^{-1/2}(\Gamma)} \le C\, \dt^3 \quad\text{ for } \ 0\le n\dt \le T.
$$
Since we can apply the same argument also to spatial derivatives of $\psi$ (in the assumed case of a smooth boundary $\Gamma$), we even have
$$
\| \vecb{d}_n \|_{H^{1/2}(\Gamma)} \le C\, \dt^3.
$$
We test \eqref{psi-eq-err} with $\vecb{\eps}$, multiply  with $e^{-2\widetilde\sigma t}$ with $\widetilde\sigma=1/T$ and integrate from $0$ to $T$. With \eqref{L-coerc} and the Runge-Kutta convolution coercivity as given by Theorem~\ref{thm:rk-pos}, and with the strict monotonicity \eqref{psi-eq-err} we conclude that
\begin{align}\nonumber
\alpha  
&\dt\sum_{n = 0}^{N} e^{-2\widetilde \sigma n\dt} \|((\vecb{\partial}_t^{\dt})^{-1}\vecb{\eps})_n\|_{H^{1/2}(\Gamma)}^2 +\beta \dt\sum_{n = 0}^{N} e^{-2\widetilde \sigma n\dt} \|\vecb{\eps}_n\|_{L_2(\Gamma)}^2 
\\ \label{energy-est}
&\le \dt\sum_{n = 0}^{N} e^{-2\widetilde \sigma n\dt} \langle \vecb{\eps}_n, \vecb{d}_n \rangle
\end{align}
and estimate further
$$
\langle \vecb{\eps}_n, \vecb{d}_n \rangle 
\le \| \vecb{\eps}_n \|_{L_2(\Gamma)} \,\| \vecb{d}_n \|_{L_2(\Gamma)} \le \frac\beta 2  \| \vecb{\eps}_n \|_{L_2(\Gamma)} +\frac1{2\beta} \| \vecb{d}_n \|_{L_2(\Gamma)}^2.
$$
We thus find the stability estimate
$$
\| (\vecb{\partial}_t^{\dt})^{-1}\vecb{\eps} \|_{\ell_2^\dt(0:N;H^{1/2}(\Gamma)^2)} + 
\| \vecb{\eps} \|_{\ell_2^\dt(0:N;L_2(\Gamma)^2)} 
 \le C \, \| \vecb{d} \|_{\ell_2^\dt(0:N;L_2(\Gamma)^2)}.
$$
Since $ (\vecb{\partial}_t^{\dt})^{-1}\vecb{\eps} = \gamma^+{\vecb{u}}^\tau - \gamma^+\vecb{u}$, this proves \eqref{err-boundary}.

Let us denote by $M(s) = S(s)V^{-1}(s):H^{1/2}(\Gamma) \to H^1(\Omega^+)$ the operator that maps Dirichlet data in $H^{1/2}(\Gamma)$ to the corresponding solution 
$\hat u\in H^1(\Omega^+)$ of the Helmholtz equation $s^2\hat u-\Delta \hat u=0$. By \cite[Equation (3.10)]{Say16}, this is bounded for $\Re s\ge \sigma>0$ by
$$
\| M(s) \|_{H^1(\Omega^+) \gets H^{1/2}(\Gamma)} \le C(\sigma) \frac{|s|^{3/2}}{\Re s}. 
$$
We then have
\begin{align*}
&{\vecb{u}}^\tau - \vecb{u} = M(\vecb{\partial}_t^{\dt})\gamma^+{\vecb{u}}^\tau - 
\Bigl(\bigl( M(\partial_t)\gamma^+u(t_n+c_i\dt)\bigr)_{i=1}^2 \Bigr)_{n=0}^{N-1}
\\
&= M(\vecb{\partial}_t^{\dt}) (\gamma^+{\vecb{u}}^\tau - \gamma^+\vecb{u}) + 
 \Bigl( M(\vecb{\partial}_t^{\dt})  \gamma^+\vecb{u} - \Bigl(\bigl( M(\partial_t)\gamma^+u(t_n+c_i\dt)\bigr)_{i=1}^2 \Bigr)_{n=0}^{N-1} \Bigr).
\end{align*}
By Theorem~\ref{thm:err-rkcq-int} and the bound for $M$, the last term is bounded by $O(\tau^{5/2})$ in the $H^1(\Omega^+)$ norm. The first term is only $O(\tau^{3/2})$, since we lose a factor $\tau^{3/2}$ from the $O(\tau^3)$ error bound for $\gamma^+ u$ because of the $O(|s|^{3/2})$ bound of $M(s)$; this follows from Lemma 5.2 in \cite{BanL} and Parseval's identity.
\qed
\end{proof}

In a similar way we obtain the following results for the alternative discretisations \eqref{bie-num-1} and \eqref{bie-num-2}:

(i) In addition to Proposition~\ref{prop:err-nonlin}, assume that $g$ has bounded second derivatives. With the discretisation \eqref{bie-num-1} instead of \eqref{bie-num}, the error bound in the $H^{1}(\Omega^+)$ norm improves to $O(\tau^{5/2})$, and the $\ell_2^\dt$ error in a point $x$ bounded away from the boundary $\Gamma$ is at most $O(\tau)$.

(ii) In addition to Proposition~\ref{prop:err-nonlin}, assume that $g$ has bounded second and third derivatives. With the discretisation \eqref{bie-num-2} instead of \eqref{bie-num}, the error bound in the $H^{1}(\Omega^+)$ norm improves to $O(\tau^3)$, and the $\ell_2^\dt$ error in a point $x$ bounded away from the boundary $\Gamma$ is at most $O(\tau^2)$.

The proofs of these error bounds are very similar to that of Proposition~\ref{prop:err-nonlin}, using in addition a discrete Gronwall inequality at the end of the estimation of~$\vecb{\eps}$, and an $O(|s|^3)$ bound for the norm of the operator from $H^{1/2}(\Gamma)\to\C$ that maps Dirichlet data to the solution of the Helmholtz equation $s^2\hat u-\Delta \hat u=0$ at a point $x\in\Omega^+$ bounded away from $\Gamma$, for $s$ in a right half-plane. Since our main concern here is to illustrate the use of the convolution coercivity, we omit the details of these extensions.

\begin{remark} If we set $\widetilde L(s)=L(s+\sigma)$ and $\widetilde \psi(t)=e^{-\sigma t}\psi(t)$ for some $\sigma>0$, then the boundary integral equation \eqref{psi-eq} is equivalent to
\begin{equation} \label{psi-eq-shift}
\bigl(\widetilde L(\partial_t)\widetilde \psi\bigr)(t) + e^{-\sigma t} g(e^{\sigma t}\widetilde\psi (t)+ {\dot u}^{inc}(t))  = e^{-\sigma t} \partial_\nu^+ u^{inc}(t).
\end{equation}
By \eqref{L-coerc}, we then have the coercivity estimate for $\widetilde L(s)$ for all $\Re s \ge 0$ (and not just for $\Re s \ge \sigma$):
$$
\Re\langle \psi, \widetilde L(s) \psi \rangle \ge \alpha(\sigma) \, \frac{\Re s+\sigma}{|s+\sigma|^2} \,\|\psi\|_{H^{1/2}(\Gamma)}^2
\quad\text{ for all }\ \psi\in H^{1/2}(\Gamma).
$$
By Theorem~\ref{thm:rk-pos}, the coercivity estimate for the convolution quadrature approximation of $\widetilde L(\partial_t)\widetilde \psi$ is then obtained for {\it every} algebraically stable Runge-Kutta method (and not just the two-stage Radau IIA method). Hence, by discretising the shifted boundary integral equation \eqref{psi-eq-shift} on an interval $[0,T]$ with shift $\sigma=1/T$, we obtain Runge--Kutta based convolution quadrature time discretisations of arbitrarily high order of convergence (assuming sufficient regularity of the exact solution). We remark that similar shifts are familiar in the convergence analysis of space-time Galerkin methods for time-dependent boundary integral equations \cite{BamH}. As in that case, numerical experiments indicate that implementing the shift may not be necessary in practical computations, although this is not backed by theory.
\end{remark}

\section{Numerical experiments}

\subsection{Scattering by the unit sphere}

In these experiments we let $\Omega^+$ be the exterior of the unit sphere and the trace of the incident wave $u^{inc}$ on the sphere be space independent. As constant functions are eigenfunctions of all the integral operators on the sphere \cite{Ned}, the solution will also be constant in space. The eigenvalue for the combined operator $L(s)$  in \eqref{psi-eq} is given by
\begin{equation}
  \label{eq:Lsphere}
  L(s)\hat\psi = -s^{-1} \mathrm{DtN}^+(s)\hat\psi = \left(1+\frac1s\right)\hat\psi,
\end{equation}
for any $\hat\psi$ constant in space.
This operator will reflect well the behaviour of scattering by a convex obstacle, but not that of a general scatterer. For this reason we concentrate on the corresponding  interior problem with
\begin{equation}
  \label{eq:Lsphere_int}
  L^-(s)\hat\psi = s^{-1} \mathrm{DtN}^-(s)\hat\psi =  \left(-\frac1s+\frac{1+e^{-2s}}{1-e^{-2s}}\right)\hat \psi,
\end{equation}
again for $\hat\psi$ constant.  Treating both these operators as scalar, complex valued functions of $s$, we see that both have a better behaviour than the general operators, see \eqref{L-bound} and \eqref{L-coerc}. Namely
\[
|L(s)| \leq C(\sigma), \qquad \Re L(s) \geq 1
\]
and
\[
|L^-(s)| \leq C(\sigma), \qquad \Re L^-(s) \geq \alpha(\sigma).
\]
As the operator $L(s)$ is too simple, in the numerical experiments we only consider the scalar, non-linear equation
\begin{equation}
  \label{eq:system_scalar}  
L^-(\partial_t)\psi +g(\psi+\dot u^{inc}) = 0.
\end{equation}

Even though these operators are of such a simple form, due to the nonlinearity the exact solution is not available. Nevertheless, a highly accurate solution is not expensive to evaluate and can be used to compute the error in the $\ell^\tau_2$ norm. We have performed the numerical experiments with the following choices of $g$ and $u^{inc}$
\[
g_1(\xi) = \tfrac14 \xi+\xi|\xi|, \qquad g_2(\xi) = \tfrac14 \xi+\xi^3, \qquad u^{inc}(t) = 2e^{-10(t-5/2)^2}
\]
and with final time $T = 6$. Note that $g_1$ is once continuously differentiable whereas $g_2$ is infinitely differentiable. The data $u^{inc}$ is not causal, but it is vanishingly small for $t < 0$ and we have found that this discrepancy has no significant effect on the results. 

\begin{figure}
  \centering
  \includegraphics[width=.5\textwidth]{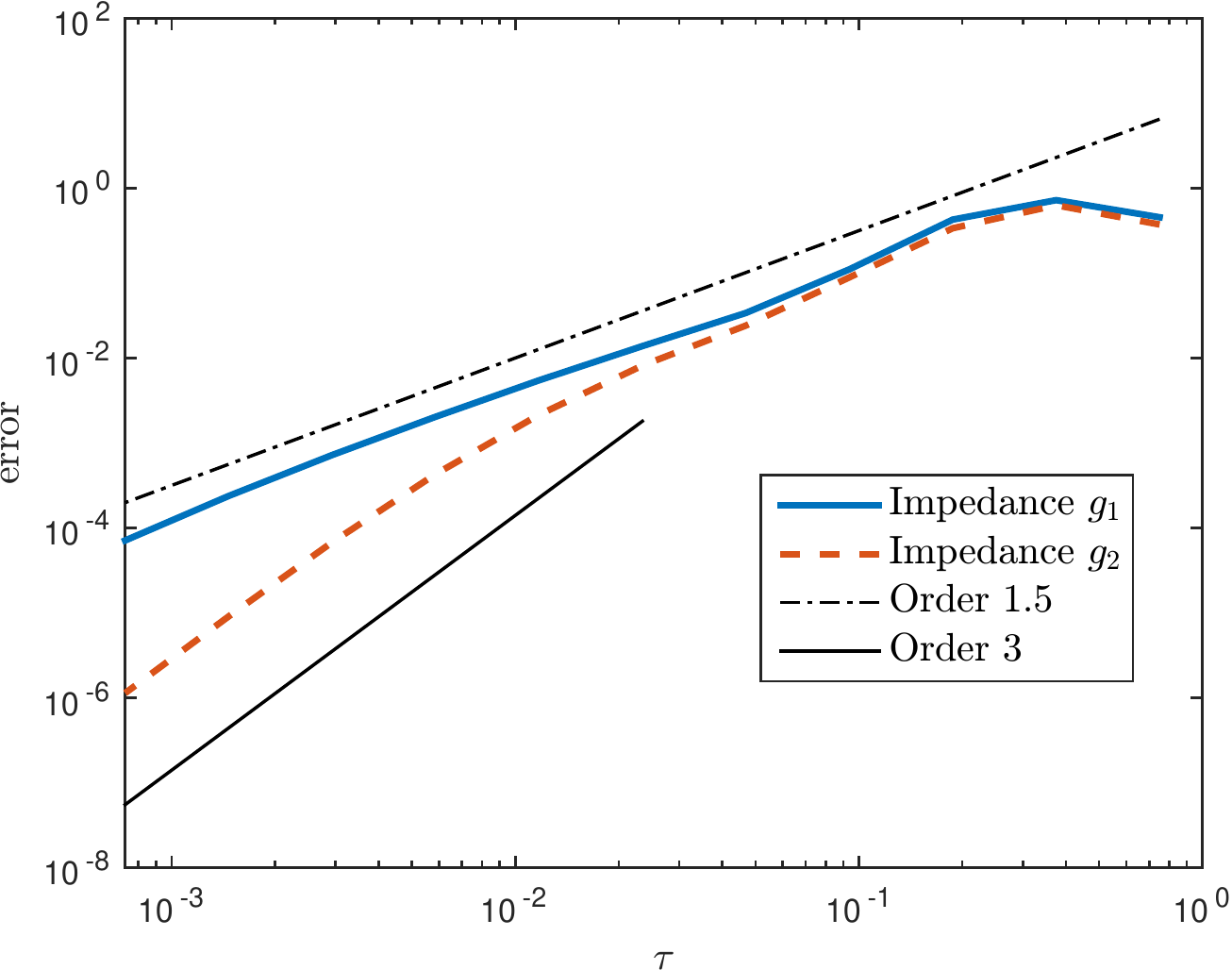}
  \caption{Convergence of the error for the two-stage Radau IIA method with the two different non-linear impedance conditions.}
  \label{fig:sphere}
\end{figure}

In Figure~\ref{fig:sphere} we show the convergence of the two-stage Radau IIA convolution quadrature. As expected, for the smooth non-linear condition we obtain full  order of convergence.  The solution and its first derivative are shown in Figure~\ref{fig:soln_sphere}. Note that the two solutions have a similar shape, but  a closer look at the derivative in Figure~\ref{fig:soln_sphere_blowup} reveals that one is smooth and the other only once continuously differentiable.

\begin{figure}
  \centering
  \includegraphics[width=.45\textwidth]{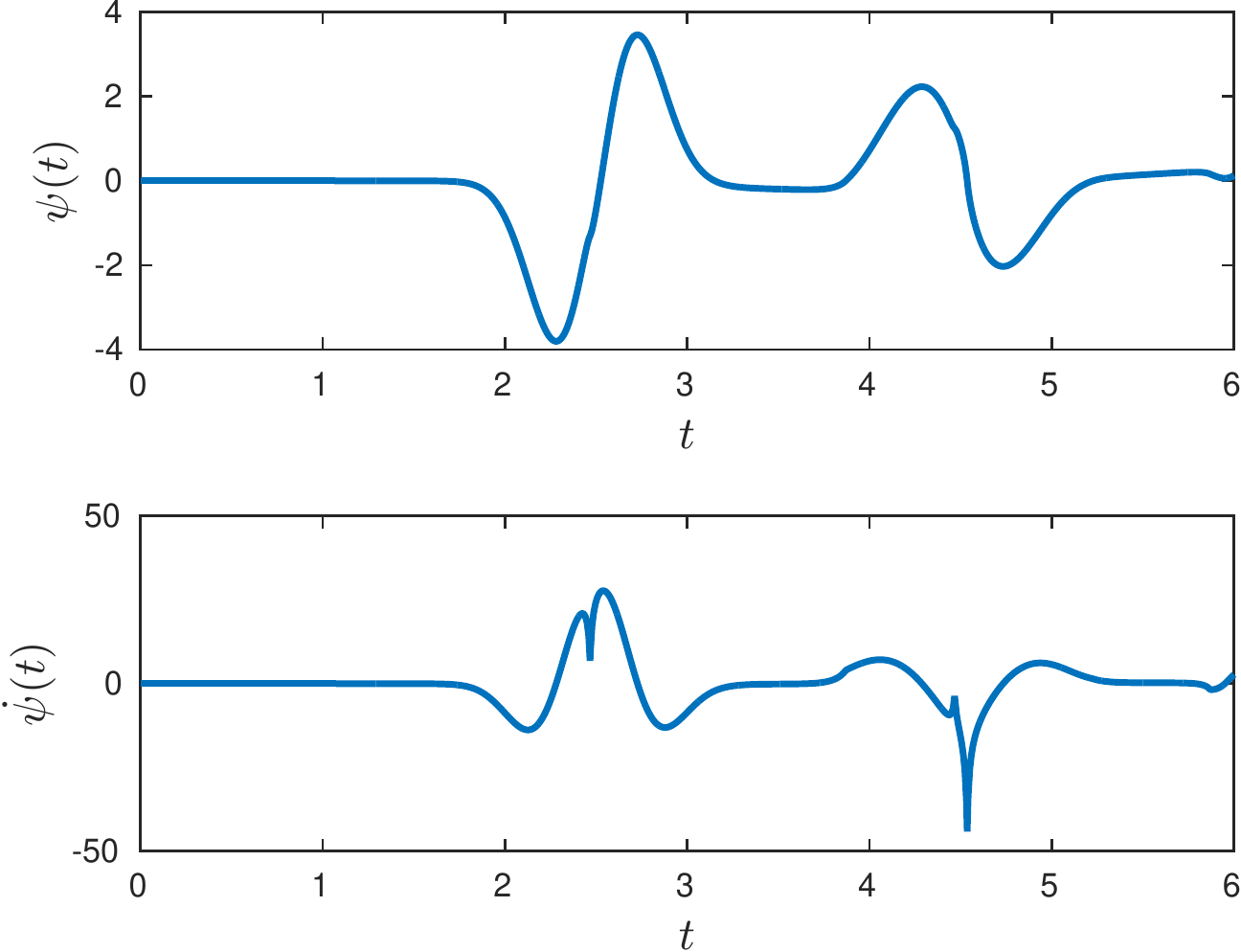}
  \includegraphics[width=.45\textwidth]{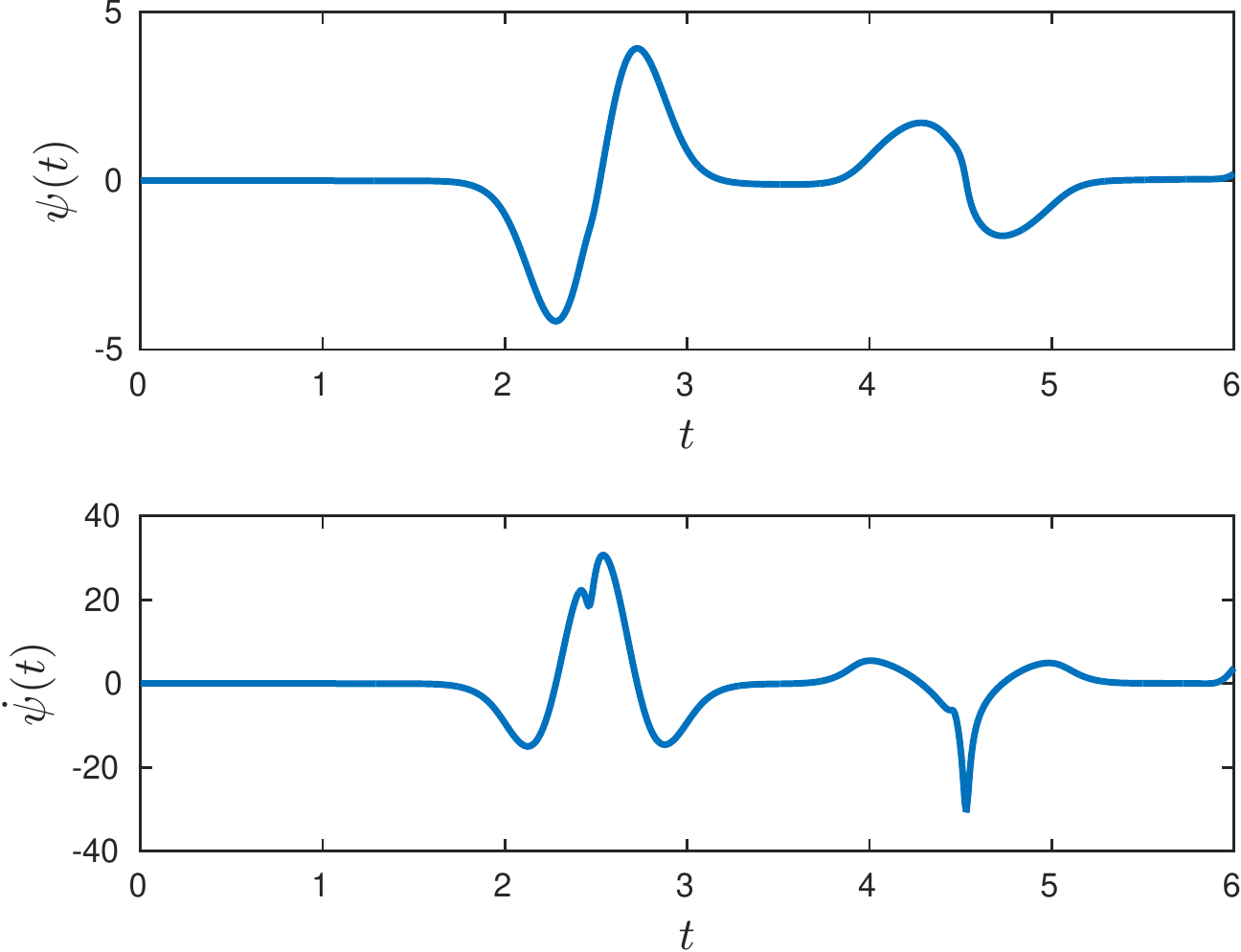}
  \caption{We show the solution and its first derivative. On the left is the solution with the once continuously differentiable impedance  $g_1$ and on the right with the smooth impedance $g_2$.}
  \label{fig:soln_sphere}
\end{figure}

\begin{figure}
  \centering
  \includegraphics[width=.4\textwidth]{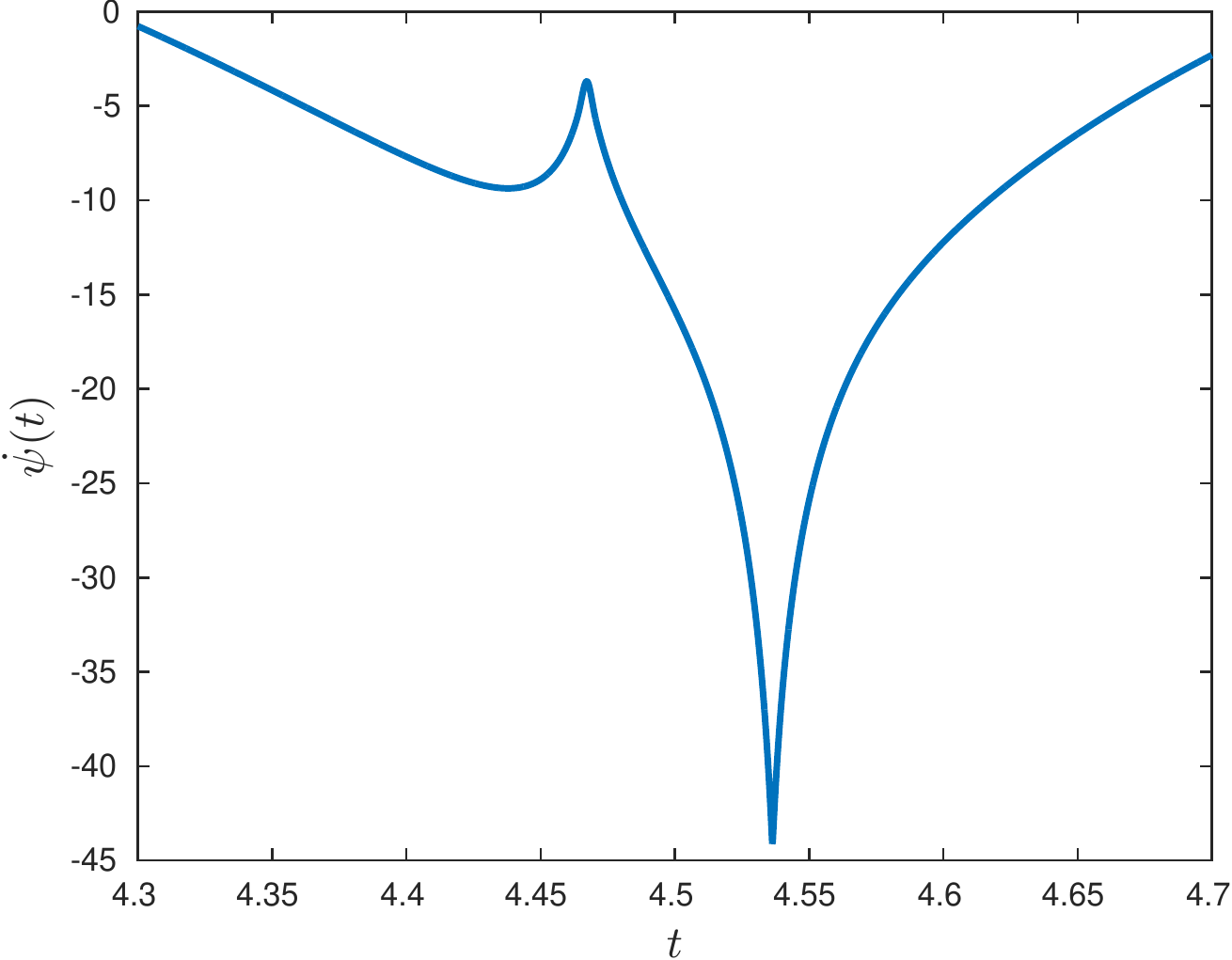}
  \includegraphics[width=.4\textwidth]{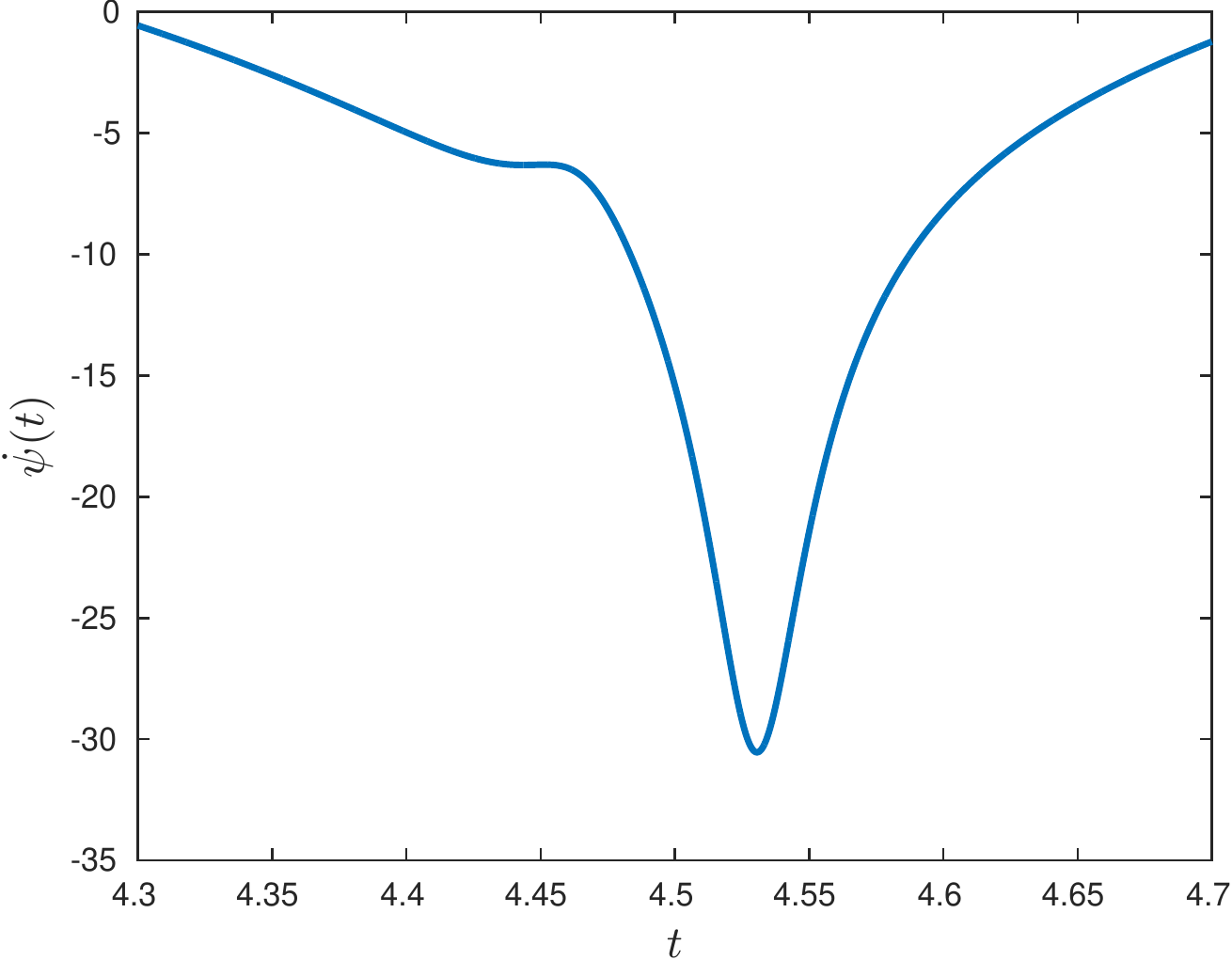}
  \caption{A closer look at the derivatives of the solutions reveals the differing smoothness.}
  \label{fig:soln_sphere_blowup}
\end{figure}

For the interior problem, as $\Re L(s) \geq 1$  the theory also applies to higher order Radau IIA methods. This is however not the case with $L^-(s)$. We nevertheless perform experiments with the three-stage Radau IIA method and obtain good results as shown in Figure~\ref{fig:RK3_sphere}.

\begin{figure}
  \centering
  \includegraphics[width=.5\textwidth]{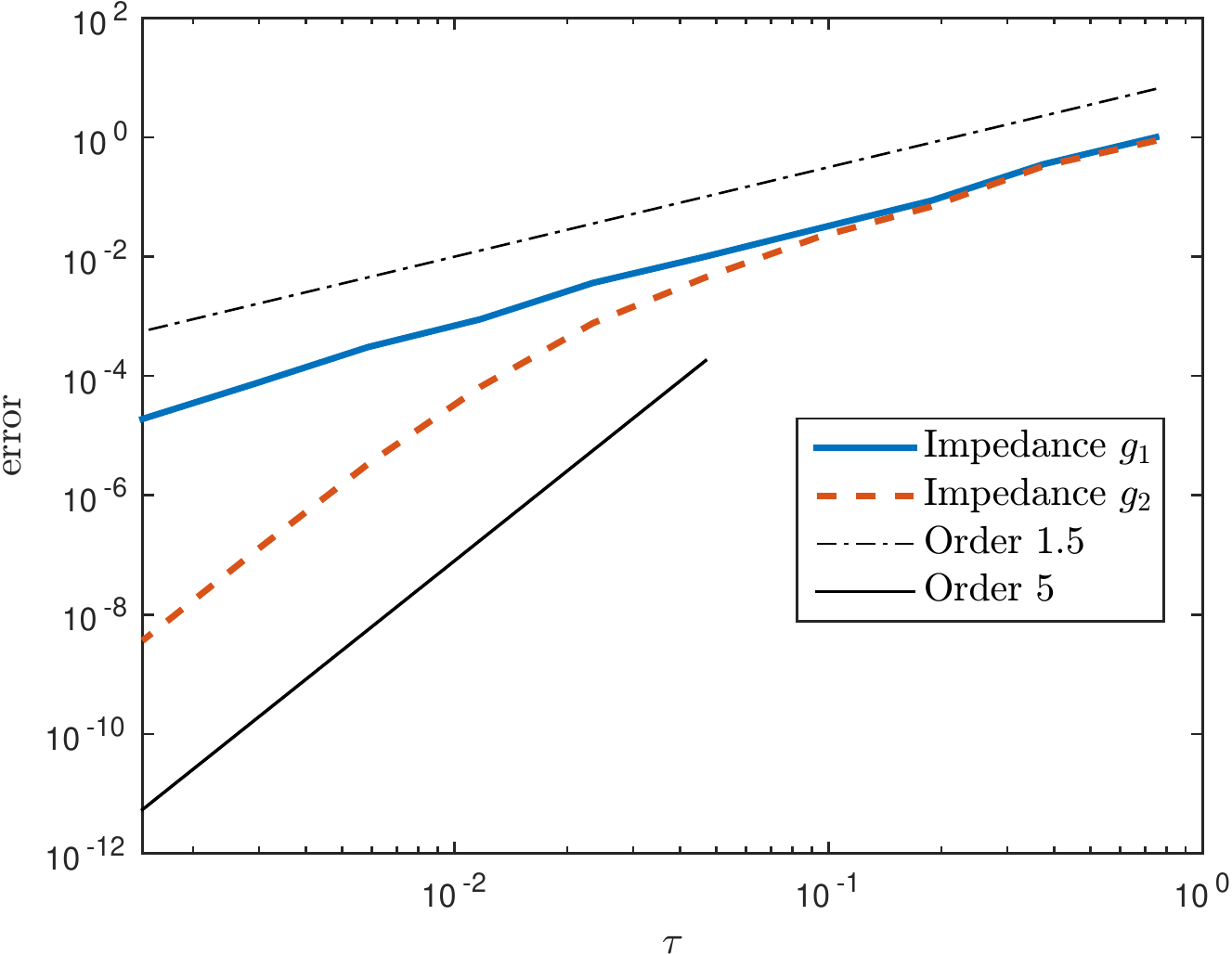}
  \caption{Convergence of the error for the three-stage Radau IIA method with the two different non-linear impedances. }
  \label{fig:RK3_sphere}
\end{figure}

\subsection{A full non-scalar example}

We end the paper with a 2D  example that requires the full BEM discretisation in space. The domain is an L-shape and the incident wave is a plane wave. Piecewise linear boundary element space is used to approximate the Dirichlet trace $\psi$ and piecewise constant boundary element space to approximate the Neumann trace $\varphi$ and the time-discretisation is performed using the two-stage Radau IIA method. The images of the solution are shown in Figure~\ref{fig:Lshape}.

\begin{figure}
  \centering
  \includegraphics[width=.5\textwidth]{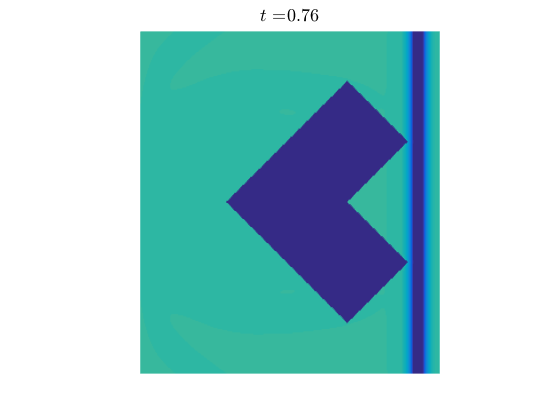}
\hspace{-1.25cm}  \includegraphics[width=.5\textwidth]{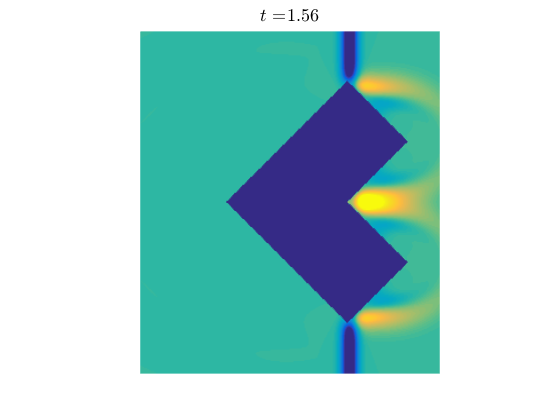}
  \includegraphics[width=.5\textwidth]{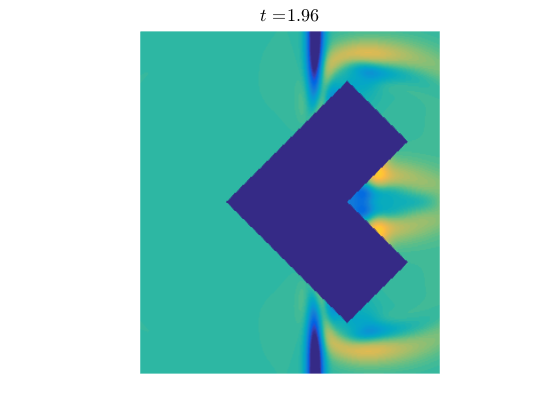}
\hspace{-1.25cm}
  \includegraphics[width=.5\textwidth]{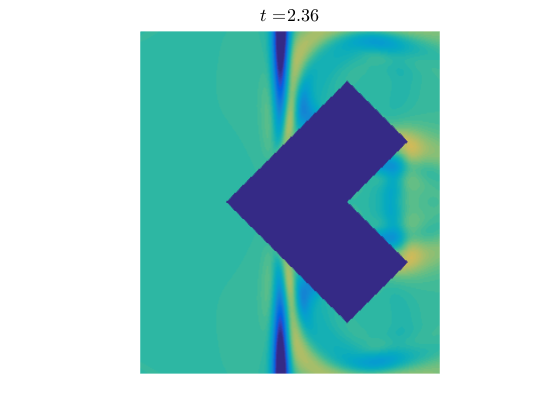}
  \caption{Scattering of a plane wave by an L-shaped domain with a non-linear impedance condition.}
  \label{fig:Lshape}
\end{figure}


\section*{Acknowledgement}
We thank Ernst Hairer for helpful discussions. This work was partially supported by DFG, SFB 1173.

\end{document}